\lstdefinelanguage{pseudo}{
frame=BT,
mathescape,
morekeywords={1.,2.,3.,4.,5.,6.,7.,8.,9.,10.,11.,12.,(a),(b),(c),(d)},
basicstyle=\normalsize \ttfamily \color{black},
showspaces=false, 
showstringspaces=false,        
showtabs=false,
keywordstyle=\bfseries,
}
\def \Id {\operatorname{Id}}
\def \tr {\operatorname{tr}}
\def \det {\operatorname{det}}
\def \Cof {\operatorname{Cof}}
\def \Hdiv {H(\operatorname{div})}
\def \Hcurl {H(\operatorname{curl})}
\def \etal {\emph{et al.}}
\newcommand{\lTwo}[1]{L^2(\Omega; {{#1}} )}
\newcommand{\hOne}[1]{H^1(\Omega; {{#1}} )}
\newcommand{\hDiv}[1]{H(\operatorname{div},\Omega; {{#1}} )}
\newcommand{\lTwoT}[1]{L^2(\Omega_\theta; {{#1}} )}
\newcommand{\hOneT}[1]{H^1(\Omega_\theta; {{#1}} )}
\newcommand{\hDivT}[1]{H(\operatorname{div},\Omega_\theta; {{#1}} )}
\numberwithin{equation}{section}
\theoremstyle{plain}
\newtheorem{theorem}{Theorem}[section]
\theoremstyle{remark}
\newtheorem{rmrk}[theorem]{Remark}
\theoremstyle{definition}
\theoremstyle{lemma}
\newtheorem{lemma}[theorem]{Lemma}
\providecommand{\keywords}[1]{{\small{\textbf{Keywords:}} #1}}
\title{
Volumetric expressions of the shape gradient of the compliance in structural shape optimization
}
\author{
M. Giacomini \footnotemark[1]\textsuperscript{ \ ,}\footnotemark[2] , O. Pantz  
\footnotemark[3] \ and K. Trabelsi \footnotemark[2]
}
\date{}
\begin{document}

\maketitle

\renewcommand{\thefootnote}{\fnsymbol{footnote}}

\footnotetext[1]{CMAP, Inria, Ecole polytechnique, CNRS, Universit\'e Paris-Saclay 91128 Palaiseau, France.}
\footnotetext[2]{DRI Institut Polytechnique des Sciences Avanc\'ees, 63 Boulevard de Brandebourg, 94200 Ivry-sur-Seine, France.}
\footnotetext[3]{Universit\'e C\^ote d'Azur, CNRS, LJAD, France.
\vspace{5pt} }

\footnotetext{
\textit{
M. Giacomini is member of the DeFI team at Inria Saclay \^Ile-de-France.
\emph{Current address:} Laboratori de C\`alcul Num\`eric (LaC\`aN), Universitat Polit\`ecnica de Catalunya BarcelonaTech. Jordi Girona 1 i 3, 08034 Barcelona, Spain.}
\vspace{5pt} }

\footnotetext{\textit{e-mail:} \texttt{matteo.giacomini@polytechnique.edu; 
olivier.pantz@unice.fr; karim.trabelsi@ipsa.fr}}

\renewcommand{\thefootnote}{\arabic{footnote}}

\begin{abstract}
In this article, we consider the problem of optimal design of a compliant structure 
under a volume constraint, within the framework of linear elasticity.
We introduce the pure displacement and the dual mixed formulations 
of the linear elasticity problem and we compute the volumetric expressions of the shape 
gradient of the compliance by means of the velocity method.
A preliminary qualitative comparison of the two expressions of the shape gradient is 
performed through some numerical simulations using the Boundary Variation Algorithm.
\end{abstract}
%
%
\keywords{
Shape optimization; Linear elasticity; Volumetric shape gradient; Compliance minimization; 
Pure displacement formulation; Dual mixed formulation
}

\section{Introduction}

In his seminal work \cite{Hadamard1907}, Hadamard proposed a strategy to optimize 
a given shape-dependent functional by deforming the domain according to a 
velocity field. 
Within this framework, a key aspect is the choice of an appropriate direction 
that guarantees the improvement of the value of the functional under analysis.
Gradient-based methods for shape optimization are a well-established approach 
for the solution of PDE-constrained optimization problems of shape-dependent 
functionals. In particular, they exploit the information of the so-called 
shape gradient - that is the differential of the objective functional 
with respect to perturbations of the boundary of the shape - to compute 
the aforementioned descent direction.

Several approaches have been proposed in the literature to compute the shape 
gradient. We refer to \cite{giacomini:hal-01201914} and references therein 
for an overview of the existing methods.
The most common strategy relies on an Eulerian approach and provides a 
surface expression of the shape gradient. 
Starting from the boundary representation of the shape gradient, 
it is straightforward to construct an explicit expression for the descent 
direction. As a matter of fact, let the shape gradient of a functional 
$J(\Omega)$ be
$$
\langle dJ(\Omega),\theta \rangle = \int_{\partial\Omega}{h \theta \cdot n \ ds} ,
$$
it follows that $\theta = - h n$ \ on \ $\partial\Omega$ is a descent direction 
for $J(\Omega)$, that is $\theta$ is such that $\langle dJ(\Omega),\theta \rangle < 0$.
\\
An alternative approach for the computation of the shape gradient relies on 
mapping the quantities defined over the perturbed domain to a reference domain and 
differentiating the resulting functional. Following this method, a volumetric 
expression of the shape gradient may be derived.
The resulting expression of the shape gradient is defined on the whole 
domain $\Omega$ and the solution of an additional variational equation is 
required in order to compute the descent direction $\theta$.
\\
Owing to the Hadamard-Zol\'esio structure theorem (cf. \cite{Delfour:2001:SGA:501610}), 
the restriction of $\langle dJ(\Omega),\theta \rangle$ to the space 
$\mathcal{D}(\mathbb{R}^d , \mathbb{R}^d)$ is a vector-valued distribution whose 
support is included in $\partial\Omega$.
Though the two expressions are equivalent in a continuous framework, 
the surface representation of the shape gradient may not exist 
if the boundary of the domain is not sufficiently smooth 
and the corresponding descent direction $\theta$ may suffer from poor regularity. 
The interest of using volumetric formulations of the shape gradient was first 
suggested in \cite{Berggren2010} and later rigorously investigated in 
\cite{HPS_bit} for the case of elliptic problems: in this latter work, the authors 
proved that the volumetric formulation generally provides better numerical accuracy when 
using the Finite Element Method.

In this work, we present a first attempt to derive the volumetric 
expressions of the shape gradient of a shape-dependent functional 
within the framework of linear elasticity.
In particular, we consider a pure displacement formulation and a 
family of dual mixed variational formulations for the linear 
elasticity problem and we analyze the classical problem of minimization 
of the compliance under a volume constraint.
We derive the volumetric expressions of the shape gradient of the 
compliance for both the pure displacement and the dual mixed formulations 
of the linear elasticity problems and we provide a preliminary 
qualitative comparison through some numerical test.

The rest of this article is organized as follows.
In section \ref{ref:linearElas}, we introduce the pure displacement 
formulation of the linear elasticity problem and two dual mixed 
formulations, namely the Hellinger-Reissner one and a variant 
arising from the weak imposition of the symmetry of the stress tensor.
In section \ref{ref:compliane_minimization}, we describe the abstract 
framework of a PDE-constrained optimization problem of a shape-dependent 
functional and we specify it for the case of the minimization of the 
compliance under a volume constraint.
The derivation of the volumetric expressions of the shape gradient 
of the compliance starting from the pure displacement and the 
dual mixed formulations is discussed respectively in sections 
\ref{ref:shapeGrad_H1} and \ref{ref:shapeGrad_mixed}. 
A preliminary comparison of the aforementioned expressions 
by means of numerical simulations is presented in 
section \ref{ref:shapeGrad_comparison}, whereas 
section \ref{ref:conclusion} summarizes our results and 
highlights ongoing and future investigations.

\section{The linear elasticity problem}
\label{ref:linearElas}

In this section, we introduce the governing equations that 
describe the mechanical behavior of a solid 
within the infinitesimal strain theory, that is under the assumption of small 
deformations and small displacements.
For a complete introduction to this subject, we refer the interested reader to 
\cite{gould1993introduction, MR1262126, MR936420}.
\\
Let $\Omega \subset \mathbb{R}^d \ , \ d=2,3$ be an open and connected domain 
representing the body under analysis and 
$\partial\Omega = \Gamma^N \cup \Gamma \cup \Gamma^D$ be such that the 
three parts of the boundary are disjoint and $\Gamma^D$ has positive 
$(d-1)$-dimensional Hausdorff measure.
We describe an elastic structure subject to a volume force $f$, 
a load $g$ on the surface $\Gamma^N$, a free-boundary condition on $\Gamma$ 
and clamped on $\Gamma^D$:
\begin{equation}
\left\{
\begin{aligned}
& - \nabla \cdot \sigma_\Omega = f \ & \ \text{in} \ \Omega \\
& \sigma_\Omega = Ae(u_\Omega) \ & \ \text{in} \ \Omega \\
& \sigma_\Omega n = g \ & \ \text{on} \ \Gamma^N \\
& \sigma_\Omega n = 0 \ & \ \text{on} \ \Gamma \\
& u_\Omega = 0 \ & \ \text{on} \ \Gamma^D
\end{aligned}
\right.
\label{eq:elas_pde}
\end{equation}
In (\ref{eq:elas_pde}), $u_\Omega$ is the displacement field, $\sigma_\Omega$ is the stress 
tensor and $e(u_\Omega) \coloneqq \frac{1}{2} \left( \nabla u_\Omega + \nabla u_\Omega^T \right)$ 
is the linearized strain tensor.
The full set of equations consists of tree conservation laws - i.e. the conservation of 
mass, the balance of momentum and of angular momentum - and a material law that describes the 
relationship among the variables at play and depends on the type of solid under 
analysis. 
In particular, the balance of angular momentum implies the symmetry of the stress tensor, 
that is $\sigma_\Omega$ belongs to the space $\mathbb{S}_d$ of $d \times d$ symmetric matrices.
Moreover, we consider a linear elastic material and we prescribe the so-called 
Hooke's law which establishes a linear dependency between the stress 
tensor and the linearized strain tensor via the fourth-order tensor 
$A : \Omega \rightarrow \mathbb{S}_d$ known as elasticity tensor.
In this work, we restrict to the case of a homogeneous isotropic material, 
whence the elasticity tensor $A$ depends neither on $x$ nor on 
the direction of the main strains.
\\
The mechanical properties of a linear elastic homogeneous isotropic material 
are determined by the pair $(\lambda,\mu)$ - known as first and second Lam\'e constants - 
or alternatively by the Young's modulus $E$ and the Poisson's ratio $\nu$ 
(cf. e.g. \cite{MR1262126}).
Within the range of physically admissible values of these constants, the relationship between stress 
tensor and strain tensor reads as follows
\begin{equation}
\sigma_\Omega = A e(u_\Omega) = 2\mu e(u_\Omega) + \lambda \tr(e(u_\Omega))\Id
\label{eq:elas_hooke_lame} 
\end{equation}
where $\tr(\cdot) \coloneqq \cdot : \Id$ is the trace operator and $:$ is the 
Frobenius product.
We remark that the elasticity tensor exists and is invertible as long as 
$\lambda < \infty$. Within this framework, we may introduce the so-called 
compliance tensor $A^{-1}$ whose application to the stress tensor provides the strain tensor:
\begin{equation}
e(u_\Omega) = A^{-1}\sigma_\Omega = \frac{1}{2\mu} \sigma_\Omega - \frac{\lambda}{2\mu(d\lambda+2\mu)} \tr(\sigma_\Omega)\Id  
\label{eq:elas_hooke_inverse} 
\end{equation}

\begin{rmrk}
It is straightforward to observe that when $\lambda \rightarrow \infty$ 
the divergence of the displacement field in (\ref{eq:elas_hooke_lame}) 
has to vanish, that is, the material under analysis is said to be incompressible.
Within this context, the elasticity tensor does not exist and the compliance tensor is singular.
\label{rmrk:incompressibility}
\end{rmrk}

\subsection{The pure displacement variational formulation}
\label{ref:pureDisp}

A classical formulation of the linear elasticity problem is the so-called pure displacement 
formulation in which we express the stress tensor $\sigma_\Omega$ in terms of $u_\Omega$ using 
(\ref{eq:elas_hooke_lame}) and we seek the displacement field within the Sobolev space $H^1(\Omega;\mathbb{R}^d)$.
Let $f \in H^1(\mathbb{R}^d;\mathbb{R}^d)$ and $g \in H^2(\mathbb{R}^d;\mathbb{R}^d)$. We define the 
following space $V_\Omega$
\begin{equation}
V_\Omega \coloneqq H^1_{0,\Gamma^D}(\Omega;\mathbb{R}^d) = \{ v \in \hOne{\mathbb{R}^d} \ : \ v=0 \ \text{on} \ \Gamma^D \}
\label{eq:elas_H1} 
\end{equation}
and we seek a function $u_\Omega \in V_\Omega$ such that 
\begin{equation}
a_\Omega(u_\Omega,\delta u) = F_\Omega(\delta u) \quad \forall \delta u \in V_\Omega 
\label{eq:elas_classical} 
\end{equation}
where the bilinear form $a_\Omega(\cdot,\cdot) : V_\Omega \times V_\Omega \rightarrow \mathbb{R}$ 
and the linear form $F_\Omega(\cdot) : V_\Omega \rightarrow \mathbb{R}$ read as follows
\begin{equation}
a_\Omega(u_\Omega,\delta u) \coloneqq \int_\Omega{ Ae(u_\Omega) : e(\delta u) \ dx } \quad , \quad  F_\Omega(\delta u) \coloneqq \int_\Omega{ f \cdot \delta u \ dx } + \int_{\Gamma^N}{ g \cdot \delta u \ ds } .
\label{eq:elas_H1_a_F} 
\end{equation}
The coercivity of the bilinear form $a_\Omega(\cdot,\cdot)$ may be proved using 
Korn's inequality (cf. \cite{10.2307/2132902}) and existence and uniqueness 
of the solution of problem (\ref{eq:elas_classical}) follow from the classical 
Lax-Milgram theorem. 

\begin{rmrk}
The elasticity tensor $A$ acts as a coefficient in the pure displacement formulation 
(\ref{eq:elas_classical}) of the linear elasticity problem.
As previously stated, $A$ deteriorates for nearly incompressible materials and 
does not exist in the incompressible limit (cf. remark \ref{rmrk:incompressibility}). 
Hence, stability issues may arise in the nearly incompressible case, whereas in the 
incompressible limit the stress tensor cannot be expressed in terms of the displacement field 
and the pure displacement variational formulation cannot be posed.
Nevertheless, outside these configurations the formulation (\ref{eq:elas_classical}) 
accurately describes the mechanical phenomena under analysis 
and the corresponding approximation via Lagrangian Finite Element functions 
provides optimal convergence rate of the discretized solution to the continuous one 
(cf. e.g. \cite{MR2373954, braess2001finite}). 
\end{rmrk}

\subsection{Mixed variational formulations via the Hellinger-Reissner principle}
\label{ref:mixed}

Besides the aforementioned stability issues, a major drawback of the pure displacement 
variational formulation is the indirect evaluation of the stress tensor which is not 
computed as part of the solution of the linear elasticity problem but may only be derived from
(\ref{eq:elas_hooke_lame}) via a post-processing of the displacement field $u_\Omega$.
A possible workaround for both these issues is represented by mixed variational formulations 
in which the target solution is the pair $(\sigma_\Omega,u_\Omega)$ representing 
respectively the stress and displacement fields.
This family of approaches was first proposed by Reissner in his seminal work \cite{reissner1950on} 
and has known a great success in the scientific community since.
We refer to \cite{MR1077658} for additional information on dual mixed variational formulations of the linear 
elasticity problem whereas a detailed introduction to mixed Finite Element methods may be found in \cite{BoffiBrezziFortin}.

Let us introduce the space 
$\hDiv{\mathbb{S}_d} \coloneqq \{ \tau \in \lTwo{\mathbb{S}_d} \ : \ \nabla \cdot \tau \in \lTwo{\mathbb{R}^d} \}$ 
of the symmetric square-integrable tensors whose row-wise divergence is square-integrable.
Thus, we define the spaces $V_\Omega \coloneqq \lTwo{\mathbb{R}^d}$, 
$\Sigma_\Omega \coloneqq \{ \tau \in \hDiv{\mathbb{S}_d} \ : \ \tau n = g \ \text{on} \ \Gamma^N \ \text{and} \ \tau n = 0 \ \text{on} \ \Gamma \}$ and 
$\Sigma_{\Omega,0} \coloneqq \{ \tau \in \hDiv{\mathbb{S}_d} \ : \ \tau n = 0 \ \text{on} \ \Gamma^N \cup \Gamma \}$ 
and we seek $(\sigma_\Omega,u_\Omega) \in \Sigma_\Omega \times V_\Omega$ such that 
\begin{equation}
\begin{aligned} 
a_\Omega(\sigma_\Omega,\delta\sigma) + & b_\Omega(\delta\sigma,u_\Omega) = 0 \quad & \forall \delta\sigma \in \Sigma_{\Omega,0} \\
& b_\Omega(\sigma_\Omega,\delta u) = F_\Omega(\delta u) \quad & \forall \delta u \in V_\Omega
\end{aligned}
\label{eq:elas_mixed} 
\end{equation}
where the bilinear forms $a_\Omega(\cdot,\cdot):\Sigma_\Omega \times \Sigma_\Omega \rightarrow \mathbb{R}$ and 
$b_\Omega(\cdot,\cdot):\Sigma_\Omega \times V_\Omega \rightarrow \mathbb{R}$ and the linear form 
$F_\Omega(\cdot):V_\Omega \rightarrow \mathbb{R}$ read as
\begin{gather}
a_\Omega(\sigma_\Omega,\delta\sigma) \coloneqq \int_\Omega{ A^{-1}\sigma_\Omega : \delta\sigma \ dx } \quad , \quad  b_\Omega(\sigma_\Omega,\delta u) \coloneqq \int_\Omega{ \left( \nabla \cdot \sigma_\Omega \right) \cdot \delta u \ dx } ,
\label{eq:elas_dual_a_b} \\
F_\Omega(\delta u) \coloneqq - \int_\Omega{ f \cdot \delta u \ dx } .
\label{eq:elas_dual_F}
\end{gather}
Existence and uniqueness of the solution of the dual mixed variational formulation 
(\ref{eq:elas_mixed}) follow from Brezzi's theory on mixed methods \cite{Brezzi1974,BoffiBrezziFortin}.
Moreover, in \cite{Arnold1984} the authors proved that 
stability estimates for the dual mixed variational formulation do not deteriorate, be it 
in the case of nearly incompressible materials or in the incompressible limit 
making this approach feasible for the whole range of values of the Lam\'e constants.

\begin{rmrk}
A major drawback of the previously introduced dual mixed variational formulation lies in 
the difficulty of constructing a pair of Finite Element spaces that fulfill the 
requirements of Brezzi's theory in order to guarantee the stability of the method. 
Several authors have been dealing with this issue in the last forty years.
In \cite{MR1930384}, Arnold and Winther proposed the first stable pair of Finite Element 
spaces for the discretization of the linear elasticity problem in two space dimensions. 
The corresponding three-dimensional case was later discussed in \cite{Adams2005, 10.2307/40234556}.
Owing to the large number of Degrees of Freedom and to the high order of the 
involved polynomials, the construction of the basis functions described in the aforementioned works 
and their implementation in existing Finite Element libraries is extremely complex.
Despite this class of Finite Element functions is the most straightforward way to handle 
the aforementioned problem and some recent works \cite{Carstensen20083014, Carstensen20112903} 
have shown their efficiency from a numerical point of view, 
the Arnold-Winther Finite Element spaces are currently far from being a widely spread standard in 
the community.
To the best of our knowledge, among the most common Finite Element libraries, 
only the FEniCS Project (cf. \cite{ans20553}, \texttt{http://www.fenics.org}) 
provides partial support for the Arnold-Winther functions.
\end{rmrk}

\subsection{A dual mixed variational formulation with weakly enforced symmetry of the stress tensor}
\label{ref:mixed_weak}

As stated in the previous subsection, the stress tensor is sought in a subspace of $\hDiv{\mathbb{S}_d}$.
In \cite{BoffiBrezziFortin}, the authors highlight that the choice of this space 
is strictly connected with the will of strongly imposing conservation laws.
In particular, $\sigma_\Omega$ belonging to the space of square-integrable tensors 
whose row-wise divergence is square-integrable strongly enforces the conservation of 
momentum.
Moreover, the symmetry of the stress tensor is a simplified way of expressing the 
conservation of angular momentum for the system under analysis.
It is well-known that imposing exactly a conservation law is not trivial.
Hence, strongly enforcing a second conservation law by requiring the stress 
tensor to be symmetric is likely to be difficult.
\\
In order to circumvent this issue and before the work \cite{MR1930384} by Arnold and Winther 
appeared, several alternative formulations have been proposed in 
the literature to weakly enforce the symmetry of the stress tensor via a Lagrange multiplier.
Starting from the pioneering work of Brezzi \cite{Brezzi1974} and Fraejis de Veubeke \cite{Veubeke1975}, 
several authors have proposed mixed formulations in which the symmetry of the stress tensor is 
either weakly enforced or dropped (cf. e.g. \cite{MR553347, MR946367, MR1082821}). 
One of the simplest solutions was developed by Arnold, Brezzi and Douglas Jr. in \cite{MR840802} 
via the so-called PEERS element: within this framework, the stress tensor is discretized by means of 
an augmented cartesian product of the Raviart-Thomas Finite Element space, the displacement field 
using piecewise constant functions and the Lagrange multiplier via a $\mathbb{P}^1$ Finite Element function.
Stemming from the idea of the PEERS element, several other approaches have been proposed 
in the literature, e.g. \cite{MR1005064, MR834332, MR954768, MR956898, MR859922, MR1464150}. 
For a complete discussion on this topic, we refer to \cite{MR2449101}.

In this subsection, we rely on a more recent mixed Finite Element method to approximate the problem 
of linear elasticity with weakly imposed symmetry of the stress tensor. In particular, we refer 
to \cite{MR2249345} for the construction of the stable pair of Finite Element spaces in two 
space dimensions, whereas the corresponding three-dimensional case is treated in \cite{MR2336264}.
The choice of this new approach by Arnold and co-workers, instead of the widely used PEERS, 
is mainly due to the simpler discretization arising from the novel method and to the 
possibility of extending it to the three-dimensional case in a straightforward way.
Let $\mathbb{M}_d$ be the space of $d \times d$ matrices and $\mathbb{K}_d$ be the space of 
$d \times d$ skew-symmetric matrices. 
We define the spaces $V_\Omega \coloneqq \lTwo{\mathbb{R}^d}$, 
$Q_\Omega \coloneqq \lTwo{\mathbb{K}_d}$, 
$\Sigma_\Omega \coloneqq \{ \tau \in \hDiv{\mathbb{M}_d} \ : \ \tau n = g \ \text{on} \ \Gamma^N \ \text{and} \ \tau n = 0 \ \text{on} \ \Gamma \}$ and 
$\Sigma_{\Omega,0} \coloneqq \{ \tau \in \hDiv{\mathbb{M}_d} \ : \ \tau n = 0 \ \text{on} \ \Gamma^N \cup \Gamma \}$. 
Moreover, we introduce the space $W_\Omega \coloneqq V_\Omega \times Q_\Omega$.
The extended system obtained from (\ref{eq:elas_mixed}) by relaxing the symmetry condition 
on the stress tensor through the introduction of a Lagrange multiplier reads as follows: we seek 
$(\sigma_\Omega,(u_\Omega,\eta_\Omega)) \in \Sigma_\Omega \times W_\Omega$ such that
\begin{equation}
\begin{aligned} 
a_\Omega(\sigma_\Omega,\delta\sigma) + & b_\Omega(\delta\sigma,(u_\Omega,\eta_\Omega)) = 0 \quad & \forall \delta\sigma \in \Sigma_{\Omega,0} \\
& b_\Omega(\sigma_\Omega,(\delta u,\delta\eta)) = F_\Omega(\delta u) \quad & \forall (\delta u,\delta\eta) \in W_\Omega 
\end{aligned}
\label{eq:elas_mixed_weak} 
\end{equation}
where the bilinear and linear forms have the following expressions:
\begin{gather}
a_\Omega(\sigma_\Omega,\delta\sigma) \coloneqq \int_\Omega{ A^{-1}\sigma_\Omega : \delta\sigma \ dx } 
\quad , \quad  
b_\Omega(\sigma_\Omega,(\delta u,\delta\eta)) \coloneqq \int_\Omega{ \left( \nabla \cdot \sigma_\Omega \right) \cdot \delta u \ dx} + \frac{1}{2\mu} \int_\Omega{ \sigma_\Omega : \delta\eta \ dx } ,
\label{eq:elas_dual_weak_a_b_c} \\
F_\Omega(\delta u) \coloneqq - \int_\Omega{ f \cdot \delta u \ dx } .
\label{eq:elas_dual_weak_F}
\end{gather}
Existence and uniqueness of the solution for this variant of the dual mixed variational 
formulation of the linear elasticity problem with weakly imposed symmetry of the 
stress tensor follow again from Brezzi's theory (cf. \cite{MR840802}). 
\begin{rmrk}
We highlight that if $(\sigma_\Omega,(u_\Omega,\eta_\Omega))$ is solution of 
(\ref{eq:elas_mixed_weak}), then $\sigma_\Omega$ is symmetric and 
$(\sigma_\Omega,u_\Omega) \in \hDiv{\mathbb{S}_d} \times \lTwo{\mathbb{R}^d}$ is solution 
of the original dual mixed formulation of the linear elasticity problem 
with strongly enforced symmetry of the stress tensor discussed in the previous subsection.
Though the infinite-dimensional formulation of the problem featuring weak symmetry is equivalent 
to the one in which the symmetry of the stress tensor is imposed in a strong way, 
the former allows for novel discretization techniques in which the approximation 
$\sigma_\Omega^h$ of the stress tensor $\sigma_\Omega$ is not guaranteed to 
be symmetric, that is $\sigma_\Omega^h$ solely fulfills the following condition
$$
\int_\Omega{ \sigma_\Omega^h : \delta\eta^h \ dx } = 0 \quad \forall \delta \eta^h \in Q_\Omega^h
$$
where $Q_\Omega^h$ is an appropriate discrete space approximating $\lTwo{\mathbb{K}_d}$.
\end{rmrk}

As stated at the beginning of this subsection, several choices are possible for the discrete 
spaces $\Sigma_\Omega^h$, $V_\Omega^h$ and $Q_\Omega^h$ respectively approximating 
$\hDiv{\mathbb{S}_d}$, $\lTwo{\mathbb{R}^d}$ and $\lTwo{\mathbb{K}_d}$.
In the rest of this article, we consider the approach discussed in \cite{MR2249345}, in 
which the stress tensor is approximated by the cartesian product of two pairs of 
Brezzi-Douglas-Marini Finite Element spaces while the displacement field and 
the Lagrange multiplier are both discretized using piecewise constant functions.

\section{Minimization of the compliance under a volume constraint}
\label{ref:compliane_minimization}

In this section, we introduce the problem of optimal design of compliant structures 
within the framework of linear elasticity, that is the construction of the shape that 
minimizes the compliance under a volume constraint.
Let us consider a vector field $\theta \in W^{1,\infty}(\mathbb{R}^d;\mathbb{R}^d)$.
We introduce a transformation $X_\theta : \mathbb{R}^d \rightarrow \mathbb{R}^d$ 
and we define the open subset $\Omega_\theta \subset \mathbb{R}^d$ as $\Omega_\theta = X_\theta(\Omega)$. 
Moreover, we set that $\Gamma^N_\theta = X_\theta(\Gamma^N)$, $\Gamma_\theta = X_\theta(\Gamma)$ and 
$\Gamma^D_\theta = X_\theta(\Gamma^D)$.
The displacement of an initial point $x \in \Omega$ is governed by the following differential equation:
\begin{equation}
\begin{cases}
\displaystyle\frac{d x_\theta}{d t}(t) = \theta(x_\theta(t)) \\
x_\theta(0) = x
\end{cases}
\label{eq:elas_transformation}
\end{equation}
which admits a unique solution $t \mapsto x_\theta(t,x)$ in $C^1(\mathbb{R};\mathbb{R}^d)$.
Owing to (\ref{eq:elas_transformation}), the initial point $x \in \Omega$ is transported 
by the field $\theta$ to the point $x_\theta = X_\theta(x)$ which belongs to the 
deformed domain $\Omega_\theta$.
Moreover, we denote by $D_\theta$ the Jacobian matrix of the transformation $X_\theta$ and by 
$I_\theta = \det D_\theta$ its determinant.
\\
Within the framework of shape optimization, a common choice for the transformation $X_\theta$ 
is a perturbation of the identity map, that is
\begin{equation}
X_\theta = \Id + \theta + o(\theta) \quad , \quad \theta \in W^{1,\infty}(\mathbb{R}^d;\mathbb{R}^d) .
\label{eq:elas_perturbId}
\end{equation}
Hence, $\Omega_\theta = X_\theta(\Omega) = \{ x + \theta(x) \ : \ x \in \Omega \}$ and under the 
assumption of a small perturbation $\theta$, $X_\theta$ is a diffeomorphism and belongs to the 
following space (cf. \cite{allaire2006conception}):
$$
\mathcal{X} \coloneqq \left\{ X_\theta \ : \ (X_\theta - \Id) \in W^{1,\infty}(\mathbb{R}^d;\mathbb{R}^d) \ \text{and} \ (X_\theta^{-1} - \Id) \in W^{1,\infty}(\mathbb{R}^d;\mathbb{R}^d) \right\} .
$$
By exploiting the notation above, we introduce the set of shapes that may be obtained 
as result of a deformation of the reference domain $\Omega$:
\begin{equation}
\mathcal{U}_{\text{def}} \coloneqq \{ \Omega_\theta \ : \ \exists X_\theta \in \mathcal{X} \ , \ \Omega_\theta = X_\theta(\Omega) \} .
\label{eq:elas_Uad_def}
\end{equation}
Let us define the compliance on a deformed domain $\Omega_\theta$ as
\begin{equation}
J(\Omega_\theta) = \int_{\Omega_\theta}{A^{-1}\sigma_{\Omega_\theta} : \sigma_{\Omega_\theta} \ dx_\theta} .
\label{eq:elas_compliance} 
\end{equation}
The shape optimization problem of the compliance under a volume constraint 
may be written as the following PDE-constrained optimization problem of a shape-dependent functional:
\begin{equation}
\min_{\Omega_\theta \in \mathcal{U}_{\text{ad}}} J(\Omega_\theta)
\label{eq:shapeOpt_inf} 
\end{equation}
where the set of admissible domains $\mathcal{U}_{\text{ad}} \subset \mathbb{R}^d$
is the set of shapes in (\ref{eq:elas_Uad_def}) such that
$\sigma_{\Omega_\theta}$ is the stress tensor fulfilling the linear 
elasticity problem (\ref{eq:elas_pde}) on $\Omega_\theta$ and 
the volume $V(\Omega_\theta) \coloneqq |\Omega_\theta|$ is equal to the initial volume $|\Omega|$.

In real-life problems, the optimal design of compliant structures is usually subject to 
additional constraints, either imposed by the end-user (e.g. volume/perimeter \cite{allaire2006conception} 
or stress \cite{Duysinx2008} constraints) or by the manufacturing process (e.g. maximum/minimum thickness 
\cite{Allaire2016} or molding direction \cite{allaire:hal-01242945} constraints).
Several sophisticated strategies (e.g. quadratic penalty and augmented Lagrangian methods) 
may be considered to handle the constraints involved in optimization problems and 
we refer to \cite{nocedal1999numerical} for a thorough introduction to this subject. 
Within the field of shape optimization, an algorithm based on a Lagrangian functional 
featuring an efficient update strategy for the Lagrange multiplier has been proposed in 
\cite{smo-AP}. Several other approaches have known a great success in the literature, e.g. 
the Method of Moving Asymptotes \cite{NME:NME1620240207} and the Method of Feasible 
Directions \cite{VANDERPLAATS1973739}.
In this article, we restrict ourselves to the classical volume constraint and we enforce it through a 
penalty method using a fixed Lagrange multiplier $\gamma$. Thus the resulting unconstrained shape 
optimization problem reads as follows:
\begin{equation}
\min_{\Omega_\theta \in \mathcal{U}_{\text{ad}}} L(\Omega_\theta) \quad , \quad L(\Omega_\theta) \coloneqq J(\Omega_\theta) + \gamma V(\Omega_\theta)
\label{eq:elas_unconstrained} 
\end{equation}
where $J(\Omega_\theta)$ is the compliance (\ref{eq:elas_compliance}), 
$V(\Omega_\theta)$ is the volume of the domain 
and $\mathcal{U}_{ad}$ is the previously defined set of admissible shapes.

\subsection{A gradient-based method for shape optimization}
\label{ref:BVA}

We consider an Optimize-then-Discretize strategy which relies on the 
analytical computation of the gradient of the cost functional which is then discretized to 
run the optimization loop.
In particular, we exploit the so-called Boundary Variation Algorithm (BVA) described in 
\cite{smo-AP}: this method requires the computation of the 
so-called shape gradient which arises from the differentiation of the functional with respect 
to the shape.
A detailed computation of the volumetric expression of the shape gradient for the 
pure displacement and the dual mixed formulations of the linear elasticity problem 
is discussed in sections \ref{ref:shapeGrad_H1} and \ref{ref:shapeGrad_mixed}.
Here, we briefly sketch the aforementioned BVA inspired by Hadamard's boundary variation method.
After solving the linear elasticity equation, we compute the expression of the shape gradient.
Then, a descent direction is identified solving the following variational 
problem: we seek $\theta \in X$, $X$ being an appropriate Hilbert space such that
\begin{equation}
( \theta, \delta\theta )_X + \langle dL(\Omega),\delta\theta \rangle = 0 \qquad 
\forall \delta\theta \in X.
\label{eq:variationalP}
\end{equation}
The resulting information is 
used to deform the domain via a perturbation of the identity map $\Id + \theta$.
\begin{lstlisting}[language=pseudo, escapeinside={/*@}{@*/}, 
caption={The Boundary Variation Algorithm}, 
label=scpt:shape-opt-classic]
Given the domain $\Omega_0$, set $j=0$ and iterate:
1. Compute the solution of the state equation;
2. Compute a descent direction $\theta_j \in X$ solving 
/*@ 
\vspace{-7pt} 
$$
( \theta_j, \delta\theta )_X + \langle dL(\Omega_j),\delta\theta \rangle = 0 \quad \forall \delta\theta \in X \ ;
$$ 
@*/
3. Identify an admissible step $\mu_j$;
4. Update the domain $\Omega_{j+1} = (\Id + \mu_j 
\theta_j)\Omega_j$;
5. Until a stopping criterion is not fulfilled, $j=j+1$ and repeat.
\end{lstlisting}

\vspace{10pt}

\noindent We recall that a direction $\theta$ is said to be a genuine descent 
direction for the functional $L(\Omega)$ if $\langle d L(\Omega),\theta \rangle < 0$ .
It is straightforward to observe that a direction fulfilling this condition 
is such that $L(\Omega)$ decreases along $\theta$, that is $L((\Id + \theta)\Omega) < L(\Omega)$.

\subsubsection{Shape gradient of the volume}
\label{ref:shapeGrad_volume}

In order to apply algorithm \ref{scpt:shape-opt-classic} to solve problem (\ref{eq:elas_unconstrained}), 
the analytical expression of the shape gradient of $L(\Omega_\theta)$ is required. 
We remark that the volume $V(\Omega_\theta)$ is a purely geometrical quantity and does not 
depend on the solution of the state problem. Hence, its shape gradient may be easily computed 
by mapping the integral over the deformed domain $\Omega_\theta$ to the integral over 
the fixed domain $\Omega$ and by differentiating the resulting quantity with respect to $\theta$ 
in $\theta=0$ (cf. e.g \cite{Delfour:2001:SGA:501610}):
\begin{equation}
\langle dV(\Omega), \theta \rangle = \int_\Omega{\nabla \cdot \theta \ dx}.
\label{eq:elas_shapeGrad_volume}
\end{equation}
Moreover, owing to (\ref{eq:elas_shapeGrad_volume}) and to the fact that $\Gamma^N$ and 
$\Gamma^D$ are fixed - that is $\theta \cdot n = 0$ on $\Gamma^N \cup \Gamma^D$ - 
the surface expression of the shape gradient of the volume reads as
\begin{equation}
\langle dV(\Omega), \theta \rangle = \int_\Gamma{\theta \cdot n \ ds} .
\label{eq:surfaceShapeV} 
\end{equation}

In the rest of this article, we will focus on the shape gradient of the compliance. 
In particular, in subsection \ref{ref:surface} we will recall the expression of the 
surface shape gradient of the compliance (\ref{eq:elas_compliance}), 
whereas in sections \ref{ref:shapeGrad_H1} and \ref{ref:shapeGrad_mixed} we will derive 
the volumetric expressions respectively for the pure displacement formulation 
(cf. subsection \ref{ref:pureDisp}) and for the mixed formulations 
(cf. subsections \ref{ref:mixed} and \ref{ref:mixed_weak}) of the linear elasticity problem.

\subsubsection{Surface expression of the shape gradient of the compliance}
\label{ref:surface}

In this subsection we recall the surface expression of the shape gradient of the compliance 
which will be later used in section \ref{ref:shapeGrad_comparison} to perform a preliminary 
numerical comparison with the corresponding volumetric formulations.
In particular, for the compliance we get
\begin{equation}
\langle dJ(\Omega), \theta \rangle = - \int_\Gamma{\left( 2\mu e(u_\Omega) : e(u_\Omega) + \lambda (\tr(e(u_\Omega)))^2 \right) \theta \cdot n \ ds} 
\label{eq:surfaceShapeJ} 
\end{equation}
whereas for the augmented functional $L(\Omega_\theta)$ it follows
\begin{equation}
\langle dL(\Omega), \theta \rangle = \int_\Gamma{\left( \gamma - 2\mu e(u_\Omega) : e(u_\Omega) + \lambda (\tr(e(u_\Omega)))^2 \right) \theta \cdot n \ ds} .
\label{eq:surfaceShapeL} 
\end{equation}
We refer to \cite{allaire2006conception} for a detailed discussion on the derivation of the 
above expression.

\section{Volumetric shape gradient of the compliance via the pure displacement formulation}
\label{ref:shapeGrad_H1}

In the pure displacement formulation (cf. subsection \ref{ref:pureDisp}), 
the stress tensor can be expressed in terms of the displacement field through 
the relationship $\sigma_{\Omega_\theta} = Ae(u_{\Omega_\theta})$. 
Hence, (\ref{eq:elas_compliance}) may be rewritten as 
\begin{equation}
J(\Omega_\theta) = \int_{\Omega_\theta}{ Ae(u_{\Omega_\theta}) : e(u_{\Omega_\theta}) \ dx_\theta } = \int_{\Omega_\theta}{ f \cdot u_{\Omega_\theta} \ dx_\theta } + \int_{\Gamma^N_\theta}{ g \cdot u_{\Omega_\theta} \ ds_\theta } ,
\label{eq:elas_compliance_H1} 
\end{equation}
that is we can equivalently reinterpret the compliance as the work of the external forces applied to the domain $\Omega_\theta$.
Owing to the principle of minimum potential energy for the problem 
(\ref{eq:elas_classical})-(\ref{eq:elas_H1_a_F}) on the domain $\Omega_\theta$ 
and to (\ref{eq:elas_compliance_H1}),
we may write the compliance as follows:
\begin{equation}
J_1(\Omega_\theta) \coloneqq - \min_{u_{\Omega_\theta} \in V_{\Omega_\theta}} \int_{\Omega_\theta}{Ae(u_{\Omega_\theta}) : e(u_{\Omega_\theta}) \ dx_\theta} - 2 \int_{\Omega_\theta}{ f \cdot u_{\Omega_\theta} \ dx_\theta } - 2 \int_{\Gamma^N_\theta}{ g \cdot u_{\Omega_\theta} \ ds_\theta } ,
\label{eq:elas_max_H1}
\end{equation}
where $V_{\Omega_\theta} \coloneqq H^1_{0,\Gamma^D_\theta}(\Omega_\theta;\mathbb{R}^d) = \{ v \in \hOneT{\mathbb{R}^d} \ : \ v=0 \ \text{on} \ \Gamma^D_\theta \}$.
\\
Let $j_1(\theta) \coloneqq J_1(\Omega_\theta)$. We are interested in computing 
the shape gradient of $J_1(\Omega)$, that is
\begin{equation}
\langle dJ_1(\Omega),\theta \rangle \coloneqq \lim_{\theta \searrow 0} \frac{J_1(\Omega_\theta)-J_1(\Omega)}{\theta} = \lim_{\theta \searrow 0} \frac{j_1(\theta)-j_1(0)}{\theta} \eqqcolon j_1'(0) .
\label{eq:elas_minGrad}
\end{equation}
We refer to \cite{Delfour:2001:SGA:501610} 
for a result on the differentiability of a minimum with respect to a parameter. 
Moreover, we remark that the space $V_{\Omega_\theta}$ in (\ref{eq:elas_max_H1}) depends on the parameter 
$\theta$. We use the function space parameterization technique described in 
\cite{Delfour:2001:SGA:501610} to transport the quantities defined on the deformed domain 
$\Omega_\theta$ back to the reference domain $\Omega$. Thus, we are able to rewrite 
(\ref{eq:elas_max_H1}) using solely functions of the space $V_\Omega$ 
which no longer depends on $\theta$ and we apply elementary differential calculus techniques to 
compute the derivative of the objective functional with respect to the parameter $\theta$.

Let us introduce the following transformation to parameterize the functions in 
$H^1_{0,\Gamma^D_\theta}(\Omega_\theta;\mathbb{R}^d)$ in terms of the elements of 
$H^1_{0,\Gamma^D}(\Omega;\mathbb{R}^d)$:
\begin{equation}
\mathcal{P}_\theta : H^1_{0,\Gamma^D}(\Omega;\mathbb{R}^d) \rightarrow H^1_{0,\Gamma^D_\theta}(\Omega_\theta;\mathbb{R}^d) 
\quad , \quad
v_{\Omega_\theta} = \mathcal{P}_\theta(v_\Omega) = v_\Omega \circ X_\theta^{-1} .
\label{eq:elas_transformationH1} 
\end{equation}
\begin{lemma}
Let $u_\Omega \in H^1_{0,\Gamma^D}(\Omega;\mathbb{R}^d)$.
We consider $u_{\Omega_\theta} = \mathcal{P}_\theta(u_\Omega)$ according to 
the transformation (\ref{eq:elas_transformationH1}). It follows that
\begin{equation}
\frac{1}{2} \left( \nabla_{x_\theta} u_{\Omega_\theta} + \nabla_{x_\theta} u_{\Omega_\theta}^T \right) \eqqcolon e_{x_\theta}(u_{\Omega_\theta}) = \frac{1}{2} \left( \nabla_x u_\Omega D_\theta^{-1} + D_\theta^{-T} \nabla_x u_\Omega^T \right)
\label{eq:elas_linStrain}
\end{equation}
where $\nabla_{x_\theta}$ (respectively $\nabla_x$) represents the gradient with respect to 
the coordinate of the deformed (respectively reference) domain.
\label{theo:linStrain}
\end{lemma}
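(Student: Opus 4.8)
The plan is to reduce the identity to a single application of the chain rule, since the parameterization $\mathcal{P}_\theta$ is nothing but composition with $X_\theta^{-1}$. Writing $x = X_\theta^{-1}(x_\theta)$ so that $x_\theta = X_\theta(x)$, the definition (\ref{eq:elas_transformationH1}) reads $u_{\Omega_\theta}(x_\theta) = u_\Omega(X_\theta^{-1}(x_\theta))$, and the whole computation amounts to differentiating this composition with respect to the deformed coordinate $x_\theta$.

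First I would apply the chain rule. Adopting the convention $(\nabla u)_{ij} = \partial u_i / \partial x_j$ for the gradient of a vector field, differentiating the composition gives
\[
\nabla_{x_\theta} u_{\Omega_\theta} = \left( \nabla_x u_\Omega \right) \nabla_{x_\theta}\!\left( X_\theta^{-1} \right),
\]
where the right-hand side is a matrix product and each factor is evaluated at the corresponding point, namely $\nabla_x u_\Omega$ at $x = X_\theta^{-1}(x_\theta)$.

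The second step identifies the Jacobian of the inverse map. Since $X_\theta \in \mathcal{X}$ is a diffeomorphism, $D_\theta$ is invertible, and the inverse function theorem yields $\nabla_{x_\theta}(X_\theta^{-1}) = D_\theta^{-1}$ (the inverse of $D_\theta$ evaluated at $x$). Substituting gives $\nabla_{x_\theta} u_{\Omega_\theta} = (\nabla_x u_\Omega) D_\theta^{-1}$; transposing gives $\nabla_{x_\theta} u_{\Omega_\theta}^T = D_\theta^{-T} (\nabla_x u_\Omega)^T$. The claimed formula (\ref{eq:elas_linStrain}) then follows immediately by averaging these two expressions, which is precisely the definition of $e_{x_\theta}(u_{\Omega_\theta})$.

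The computation is elementary; the only point requiring genuine care is the bookkeeping. One must track which spatial variable each gradient acts on and at which point each factor is evaluated, and fix a matrix convention for $\nabla u$ consistent with the placement of $D_\theta^{-1}$ on the right (rather than $D_\theta^{-T}$ on the left) in the unsymmetrized gradient. With the convention above this is automatic, and the symmetry of the final expression under transposition provides a built-in consistency check.
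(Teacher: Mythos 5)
Your proof is correct and follows essentially the same route as the paper's: apply the chain rule to $u_{\Omega_\theta} = u_\Omega \circ X_\theta^{-1}$, identify the Jacobian of the inverse map with $D_\theta^{-1}$, and symmetrize. The only difference is cosmetic --- you work in matrix notation while the paper carries out the same computation in index notation.
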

\begin{proof}
Owing to (\ref{eq:elas_transformationH1}), $u_{\Omega_\theta} = u_\Omega \circ X_\theta^{-1}$.
Thus,
$$
\frac{\partial \left( u_{\Omega_\theta} \right)_i}{\partial \left( x_\theta \right)_j} = 
\frac{\partial \left( u_\Omega \right)_i}{\partial \left( x \right)_m} \frac{\partial \left( X_\theta^{-1} \right)_m}{\partial \left( x_\theta \right)_j} =
\frac{\partial \left( u_\Omega \right)_i}{\partial \left( x \right)_m} \left( D_\theta^{-1} \right)_{mj} .
$$
Hence, the result follows directly:
$$
\left( e_{x_\theta}(u_{\Omega_\theta}) \right)_{ij} = \frac{1}{2} \left( \frac{\partial \left( u_{\Omega_\theta} \right)_i}{\partial \left( x_\theta \right)_j} + \frac{\partial \left( u_{\Omega_\theta} \right)_j}{\partial \left( x_\theta \right)_i} \right) = 
\frac{1}{2} \left( \frac{\partial \left( u_\Omega \right)_i}{\partial \left( x \right)_m} \left( D_\theta^{-1} \right)_{mj} + \frac{\partial \left( u_\Omega \right)_j}{\partial \left( x \right)_m} \left( D_\theta^{-1} \right)_{mi} \right) ,
$$
that is 
$$
e_{x_\theta}(u_{\Omega_\theta}) = \frac{1}{2} \left( \nabla_x u_\Omega D_\theta^{-1} + D_\theta^{-T} \nabla_x u_\Omega^T \right) .
$$
\end{proof}
For the sake of readability and except in the case of ambiguity, henceforth
we will omit the subscript specifying the spatial coordinate with respect to which the 
gradient is computed, that is with an abuse of notation we consider 
$\nabla u_\Omega = \nabla_x u_\Omega$ and 
$\nabla u_{\Omega_\theta} = \nabla_{x_\theta} u_{\Omega_\theta}$.

Now, we use the transformation (\ref{eq:elas_transformationH1}) and the property 
(\ref{eq:elas_linStrain}) to map the first term in (\ref{eq:elas_max_H1}) to 
the reference domain $\Omega$:
\begin{equation}
\begin{aligned}
\int_{\Omega_\theta}{Ae(u_{\Omega_\theta}) : e(u_{\Omega_\theta}) \ dx_\theta} = & \int_\Omega{Ae \left(u_{\Omega_\theta} \circ X_\theta \right) : e \left(u_{\Omega_\theta} \circ X_\theta \right) I_\theta \ dx}  \\
= \int_\Omega A \bigg( \frac{1}{2} \left( \nabla u_\Omega D_\theta^{-1} \right. \hspace{-2pt}+\hspace{-2pt} D_\theta^{-T} & \left. \nabla u_\Omega^T \right) \hspace{-2pt} \bigg) : \left( \frac{1}{2} \left( \nabla u_\Omega D_\theta^{-1} + D_\theta^{-T} \nabla u_\Omega^T \right) \right) I_\theta \ dx .
\end{aligned}
\label{eq:elas_H1_pt1} 
\end{equation}
The remaining terms in (\ref{eq:elas_max_H1}) may be transported to the 
reference domain as follows:
\begin{gather}
- 2 \int_{\Omega_\theta}{ f \cdot u_{\Omega_\theta} \ dx_\theta} = 
- 2 \int_\Omega{ f \circ X_\theta \cdot \left( u_{\Omega_\theta} \circ X_\theta \right) I_\theta \ dx} = 
- 2 \int_\Omega{ f \circ X_\theta \cdot u_\Omega \ I_\theta \ dx} ,
\label{eq:elas_H1_pt2} \\
- 2 \int_{\Gamma^N_\theta}{ g \cdot u_{\Omega_\theta} \ ds_\theta} = 
- 2 \int_{\Gamma^N}{ g \circ X_\theta \cdot \left( u_{\Omega_\theta} \circ X_\theta \right) \Cof D_\theta \ ds} =
- 2 \int_{\Gamma^N}{ g \circ X_\theta \cdot u_\Omega \Cof D_\theta \ ds} ,
\label{eq:elas_H1_pt3}
\end{gather}
where $\Cof D_\theta$ is the cofactor matrix of the jacobian of $X_\theta$.
By combining (\ref{eq:elas_H1_pt1}), (\ref{eq:elas_H1_pt2}) and (\ref{eq:elas_H1_pt3}), 
we obtain the following function $j_1(\theta)$ which solely depends on the reference domain $\Omega$:
\begin{equation}
\begin{aligned}
j_1(\theta) = - \min_{u_\Omega \in V_\Omega} & \int_\Omega{A \left( \frac{1}{2} \left( \nabla u_\Omega D_\theta^{-1} + D_\theta^{-T} \nabla u_\Omega^T \right)  \right) : \left( \frac{1}{2} \left( \nabla u_\Omega D_\theta^{-1} + D_\theta^{-T} \nabla u_\Omega^T \right) \right) I_\theta \ dx} \\
& - 2 \int_\Omega{ f \circ X_\theta \cdot u_\Omega \ I_\theta \ dx} 
- 2 \int_{\Gamma^N}{ g \circ X_\theta \cdot u_\Omega \Cof D_\theta \ ds} .
\end{aligned}
\label{eq:elas_max1}
\end{equation}

Owing to (\ref{eq:elas_perturbId}), the Jacobian of the transformations 
$X_\theta$, $X_\theta^T$ and $X_\theta^{-1}$ read as
\begin{gather}
D_\theta = \Id + \nabla \theta + o(\nabla \theta) , 
\label{eq:elas_jac} \\
D_\theta^T = \Id + \nabla \theta^T + o(\nabla \theta) ,
\label{eq:elas_jacT} \\
D_\theta^{-1} = \Id - \nabla \theta + o(\nabla \theta) .
\label{eq:elas_jacInv}
\end{gather}
Moreover, we recall that
\begin{gather}
\det(\Id + C) = 1 + \tr(C) + o(C) ,
\label{eq:elas_develop_det} \\
\Cof(\Id + C) = \Id + \tr(C)\Id - C + o(C) .
\label{eq:elas_develop_cof} 
\end{gather}

We may now differentiate (\ref{eq:elas_max1}) with respect to $\theta$ in $\theta=0$ by 
exploiting (\ref{eq:elas_jacT}), (\ref{eq:elas_jacInv}), (\ref{eq:elas_develop_det}) and 
(\ref{eq:elas_develop_cof}). The shape gradient of the compliance using the 
pure displacement formulation for the linear elasticity problem reads as
\begin{equation}
\begin{aligned}
\langle dJ_1(\Omega),\theta \rangle = & \int_\Omega{A e(u_\Omega) : \left( \nabla u_\Omega \nabla \theta + \nabla \theta^T \nabla u_\Omega^T \right) \ dx} 
- \int_\Omega{A e(u_\Omega) : e(u_\Omega) (\nabla \cdot \theta) \ dx} \\
& + 2 \int_\Omega{\left( \nabla f \theta \cdot u_\Omega + f \cdot u_\Omega (\nabla \cdot \theta) \right) dx} 
+ 2 \int_{\Gamma^N}{\left( \nabla g \theta \cdot u_\Omega + g \cdot u_\Omega \left( \nabla \cdot \theta - \nabla \theta n \cdot n \right) \right) ds} .
\end{aligned}
\label{eq:elas_shapeGrad_H1}
\end{equation}

\section{Volumetric shape gradient of the compliance via the dual mixed formulation}
\label{ref:shapeGrad_mixed}

Let us consider the notation introduced in section \ref{ref:compliane_minimization} 
for the transformation $X_\theta$.
Following the same procedure as above, we may rewrite the compliance 
coupled with the constraint that the stress tensor is solution of the linear 
elasticity equation in the Hellinger-Reissner dual mixed variational formulation 
(\ref{eq:elas_mixed})-(\ref{eq:elas_dual_a_b})-(\ref{eq:elas_dual_F}) on $\Omega_\theta$
by introducing the following objective functional:
\begin{equation}
J_2(\Omega_\theta) \coloneqq \adjustlimits\inf_{\sigma_{\Omega_\theta} \in \Sigma_{\Omega_\theta}} \sup_{u_{\Omega_\theta} \in V_{\Omega_\theta}} \int_{\Omega_\theta}{A^{-1}\sigma_{\Omega_\theta} : \sigma_{\Omega_\theta} \ dx_\theta} + \int_{\Omega_\theta}{\left(\nabla \cdot \sigma_{\Omega_\theta} + f \right) \cdot u_{\Omega_\theta} \ dx_\theta}
\label{eq:elas_saddle_mixed} 
\end{equation}
where $\Sigma_{\Omega_\theta} \coloneqq \{ \tau \in \hDivT{\mathbb{S}_d} \ : \ \tau n_\theta = g \ \text{on} \ \Gamma^N_\theta \ \text{and} \ \tau n = 0 \ \text{on} \ \Gamma_\theta \}$ 
and $V_{\Omega_\theta} \coloneqq \lTwoT{\mathbb{R}^d}$.
\\
In a similar fashion, 
starting from the dual mixed variational formulation with weakly enforced symmetry of the 
stress tensor (\ref{eq:elas_mixed_weak})-(\ref{eq:elas_dual_weak_a_b_c})-(\ref{eq:elas_dual_weak_F}), 
we obtain:
\begin{equation}
\begin{aligned}
J_3(\Omega_\theta) \coloneqq \adjustlimits\inf_{\sigma_{\Omega_\theta} \in \Sigma_{\Omega_\theta}} \sup_{(u_{\Omega_\theta},\eta_{\Omega_\theta}) \in W_{\Omega_\theta}} & \int_{\Omega_\theta}{A^{-1}\sigma_{\Omega_\theta} : \sigma_{\Omega_\theta} \ dx_\theta} + \int_{\Omega_\theta}{\left(\nabla \cdot \sigma_{\Omega_\theta} + f \right) \cdot u_{\Omega_\theta} \ dx_\theta} \\ 
& + \frac{1}{2\mu} \int_{\Omega_\theta}{\sigma_{\Omega_\theta} : \eta_{\Omega_\theta} \ dx_\theta}
\end{aligned}
\label{eq:elas_saddle_mixed_weak} 
\end{equation}
where $\Sigma_{\Omega_\theta} \coloneqq \{ \tau \in \hDivT{\mathbb{M}_d} \ : \ \tau n_\theta = g \ \text{on} \ \Gamma^N_\theta \ \text{and} \ \tau n = 0 \ \text{on} \ \Gamma_\theta \}$ 
and $W_{\Omega_\theta} \coloneqq V_{\Omega_\theta} \times Q_{\Omega_\theta} \coloneqq \lTwoT{\mathbb{R}^d} \times \lTwoT{\mathbb{K}_d}$.
\\
Let $j_i(\theta) \coloneqq J_i(\Omega_\theta) \ i=2,3$. We are interested in computing 
the shape gradient of the functionals $J_i(\Omega)$'s, that is
\begin{equation}
\langle dJ_i(\Omega),\theta \rangle \coloneqq \lim_{\theta \searrow 0} \frac{J_i(\Omega_\theta)-J_i(\Omega)}{\theta} = \lim_{\theta \searrow 0} \frac{j_i(\theta)-j_i(0)}{\theta} \eqqcolon j_i'(0) .
\label{eq:elas_minmaxGrad}
\end{equation}
We refer to \cite{MR776359} for a general result on the differentiability of a min-max function, 
whereas in \cite{Delfour:2001:SGA:501610, MR2431476} some examples of shape differentiability of 
min-max functions are provided. \\
As in section \ref{ref:shapeGrad_H1}, we apply the function space parameterization technique 
to transport the quantities defined on $\Omega_\theta$ back to $\Omega$.
A key aspect of this procedure is the construction of a transformation that preserves the 
normal traces of the tensors in (\ref{eq:elas_saddle_mixed}) and (\ref{eq:elas_saddle_mixed_weak}). 
For this purpose, we rely on a special isomorphism known as contravariant Piola transform and 
we define the following mappings:
\begin{gather}
\mathcal{Q}_\theta : \hDiv{\mathbb{M}_d} \rightarrow \hDivT{\mathbb{M}_d} \quad , \quad \tau_{\Omega_\theta} = \mathcal{Q}_\theta(\tau_\Omega) = \frac{1}{I_\theta}D_\theta \tau_\Omega \circ X_\theta^{-1} D_\theta^T 
\label{eq:elas_transformationHdiv} \\
\mathcal{R}_\theta : \lTwo{\mathbb{R}^d} \rightarrow \lTwoT{\mathbb{R}^d} \quad , \quad v_{\Omega_\theta} = \mathcal{R}_\theta(v_\Omega) = D_\theta^{-T} v_\Omega \circ X_\theta^{-1} .
\label{eq:elas_transformationL2}
\end{gather}
We refer to \cite{MR1262126, MR936420} for a discussion on the Piola transform and its 
role in the mathematical theory of elasticity, to \cite{Thomas1976, MR0483555} for its 
application to mixed Finite Element methods for elliptic problems and to \cite{MR2566587} 
for some technical details on its use to efficiently evaluate variational forms in 
$\Hdiv$ and $\Hcurl$, that is the Sobolev space of square-integrable vectorfields 
whose rotation $\operatorname{curl}$ is square-integrable.
\\
Before moving to the derivation of the shape gradient via the function space parameterization 
technique, we prove the following property:
\begin{lemma}
Let $\sigma_\Omega \in \hDiv{\mathbb{M}_d}$.
We consider $\sigma_{\Omega_\theta} = \mathcal{Q}_\theta(\sigma_\Omega)$ according to the transformation 
(\ref{eq:elas_transformationHdiv}). It follows that
\begin{equation}
\nabla_{x_\theta} \cdot \sigma_{\Omega_\theta} = \frac{1}{I_\theta} D_\theta \nabla_x \cdot \sigma_\Omega
\label{eq:elas_divPiola}
\end{equation}
where $\nabla_{x_\theta} \cdot$ (respectively $\nabla_x \cdot$) represents the divergence with respect to 
the coordinate of the deformed (respectively reference) domain.
\label{theo:divPiola}
\end{lemma}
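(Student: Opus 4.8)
The plan is to reduce the statement to a component-wise identity and to let two classical facts about the Jacobian carry the computation: Jacobi's formula $\partial I_\theta / \partial (D_\theta)_{ab} = (\Cof D_\theta)_{ab}$ and the Piola identity $\sum_m \partial_{x_m}(\Cof D_\theta)_{lm} = 0$, together with the symmetry $\partial_{x_m}(D_\theta)_{lq} = \partial_{x_q}(D_\theta)_{lm}$ of the second derivatives of $X_\theta$. First I would write $(\sigma_{\Omega_\theta})_{il} = \frac{1}{I_\theta}(D_\theta)_{ip}\,\sigma_{pq}\,(D_\theta)_{lq}$, with every factor on the right-hand side understood as a function of $x = X_\theta^{-1}(x_\theta)$, and differentiate with respect to $(x_\theta)_l$. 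By the chain rule $\partial_{(x_\theta)_l} = (D_\theta^{-1})_{ml}\,\partial_{x_m}$, and using $(D_\theta^{-1})_{ml} = \frac{1}{I_\theta}(\Cof D_\theta)_{lm}$ the Eulerian divergence becomes a Lagrangian one:
\[
(\nabla_{x_\theta} \cdot \sigma_{\Omega_\theta})_i = \frac{1}{I_\theta}(\Cof D_\theta)_{lm}\,\partial_{x_m}\!\left[ \frac{1}{I_\theta}(D_\theta)_{ip}\,\sigma_{pq}\,(D_\theta)_{lq} \right] .
\]

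Next I would expand the derivative by the Leibniz rule into four contributions, according to whether $\partial_{x_m}$ falls on $1/I_\theta$, on the left factor $(D_\theta)_{ip}$, on the right factor $(D_\theta)_{lq}$, or on $\sigma_{pq}$. The contribution in which the derivative hits $\sigma$ is the target term: using $\frac{1}{I_\theta}(\Cof D_\theta)_{lm}(D_\theta)_{lq} = (D_\theta^{-1})_{ml}(D_\theta)_{lq} = \delta_{mq}$ it collapses to $\frac{1}{I_\theta}(D_\theta)_{ip}\,\partial_{x_q}\sigma_{pq} = \frac{1}{I_\theta}(D_\theta\,\nabla_x \cdot \sigma)_i$, which is exactly the claimed right-hand side. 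It then remains to show that the other three contributions cancel. The term produced by $1/I_\theta$ and the term produced by the right factor $(D_\theta)_{lq}$ pair up: differentiating $(D_\theta)_{lq}$ and contracting against $(\Cof D_\theta)_{lm}$ yields $\partial_{x_q}I_\theta$ through Clairaut symmetry and Jacobi's formula, and this matches exactly the $-\frac{1}{I_\theta^2}\,\partial_{x_q}I_\theta$ arising from $\partial_{x_m}(1/I_\theta)$, so the two annihilate each other.

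The hard part is the fourth contribution, in which the derivative falls on the left factor $(D_\theta)_{ip}$ carrying the free index $i$. After the same contraction $\frac{1}{I_\theta}(\Cof D_\theta)_{lm}(D_\theta)_{lq} = \delta_{mq}$, this term reduces to
\[
\frac{1}{I_\theta}\,\partial_{x_q}(D_\theta)_{ip}\,\sigma_{pq} ,
\]
and, unlike the other three, it is \emph{not} contracted against a cofactor of $D_\theta$, so neither the Piola identity nor Jacobi's formula is available to remove it. This residual is, I expect, the genuine obstacle on which the whole statement turns: since $\partial_{x_q}(D_\theta)_{ip} = \partial^2_{x_p x_q}(X_\theta)_i$ is symmetric in $(p,q)$, one is left to argue that its contraction with $\sigma$ does not contribute.

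To deal with it I would first try to exploit the symmetry of $\sigma$ in the Hellinger-Reissner setting ($\sigma \in \mathbb{S}_d$) together with an integration by parts against the test field $u_{\Omega_\theta} = \mathcal{R}_\theta(u_\Omega)$, transferring the second derivative of $X_\theta$ onto $\sigma$ and $u$ in the hope of a cancellation; and, should that fail, I would fall back on the observation that the residual vanishes identically when $D_\theta$ is constant (affine $X_\theta$) and examine whether, for the computation of the shape gradient, it suffices to retain the identity to first order in $\theta$, where $\partial^2 X_\theta$ collapses to $\partial^2 \theta$. I flag this term as the delicate point of the statement, precisely because a naive application of the Piola calculus produces the announced expression $\frac{1}{I_\theta}D_\theta\,\nabla_x \cdot \sigma$ only after this contribution has been disposed of.
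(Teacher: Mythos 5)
Your computation is correct, and it is in substance the paper's own proof: after rewriting the transform as $\mathcal{Q}_\theta(\sigma_\Omega)=D_\theta\,(\sigma_\Omega\circ X_\theta^{-1})\,\Cof D_\theta^{-1}$ with $\Cof D_\theta^{-1}=\frac{1}{I_\theta}D_\theta^T$, your two mutually cancelling Leibniz contributions (the one from $1/I_\theta$ and the one from the right factor $(D_\theta)_{lq}$) are jointly the derivative falling on $\Cof D_\theta^{-1}$, and their cancellation via Clairaut plus Jacobi is exactly the paper's Piola identity $\nabla_{x_\theta}\cdot\left(\Cof D_\theta^{-1}\right)=0$. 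The contribution you single out,
\begin{equation*}
\frac{1}{I_\theta}\,\sigma_{pq}\,\partial_{x_q}(D_\theta)_{ip}
\;=\;\frac{1}{I_\theta}\,\sigma_{pq}\,\frac{\partial^2 (X_\theta)_i}{\partial x_p\,\partial x_q}\,,
\end{equation*}
is genuinely there, and you are right that no amount of Piola/Jacobi calculus removes it.

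What the paper does with that term is to assert that $D_\theta$, ``being the Jacobian of the transformation'', is independent of the variable $x_\theta$, and to drop it. That assertion is precisely the statement $\partial^2_{x_p x_q}(X_\theta)_i=0$, which holds only for affine $X_\theta$ (constant $\nabla\theta$) and is false for a general admissible transformation. So you have not missed a trick: the step at which you stopped is the step the paper gets wrong, and what the computation actually proves is
\begin{equation*}
\nabla_{x_\theta}\cdot\sigma_{\Omega_\theta}
=\frac{1}{I_\theta}\left( D_\theta\,\nabla_x\cdot\sigma_\Omega + H\right),
\qquad H_i \coloneqq \sigma_{pq}\,\frac{\partial^2 (X_\theta)_i}{\partial x_p\,\partial x_q}\,,
\end{equation*}
all quantities on the right being evaluated at $x=X_\theta^{-1}(x_\theta)$; the lemma as stated holds exactly if and only if $H\equiv 0$. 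Your fallback ideas cannot repair this, for reasons worth recording. Symmetry of $\sigma_\Omega$ does not kill $H$, because the Hessian is symmetric in $(p,q)$: $H$ sees exactly the symmetric part of $\sigma_\Omega$ (skew-symmetry would help, symmetry does not), and integrating against $u_{\Omega_\theta}$ changes the statement (the lemma is pointwise) without making the term vanish. Truncation at first order in $\theta$ does not help either: with $X_\theta=\Id+\theta$ one has $H_i=\sigma_{pq}\,\partial^2_{x_px_q}\theta_i+o(\theta)$, i.e. $H$ is itself first order, so it survives the differentiation at $\theta=0$ and formally contributes an extra term $\int_\Omega \left(\sigma_\Omega : \nabla^2\theta\right)\cdot u_\Omega\,dx$ to the shape gradient (\ref{eq:elas_shapeGrad_mixed1}); equivalently, the $\theta$-independence claimed in (\ref{eq:elas_hr_pt3}) fails for non-affine velocity fields. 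This is the familiar trade-off for matrix-valued Piola maps: the ``double'' transform $\frac{1}{I_\theta}D_\theta\,\sigma\,D_\theta^T$ preserves symmetry but commutes with the row-wise divergence only under affine mappings, whereas the row-wise transform $\frac{1}{I_\theta}\,\sigma\,D_\theta^T$ satisfies the exact divergence identity but destroys symmetry. In short, your attempt reproduces the paper's argument honestly and exposes a genuine gap in the lemma and its proof — either an error or an unstated restriction to affine transformations — rather than a gap in your own reasoning.
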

\begin{proof}
First, we recall that for a given invertible matrix $C \in \mathbb{M}_d$, we get that 
\begin{equation}
C^{-1} = \frac{1}{\det C} (\Cof C)^T .
\label{eq:invCof}
\end{equation}
Owing to this property, we may rewrite (\ref{eq:elas_transformationHdiv}) as
\begin{equation}
\sigma_{\Omega_\theta} = D_\theta \sigma_\Omega \circ X_\theta^{-1} \left( \Cof D_\theta^{-1} \right) . 
\label{eq:elas_newTheta}
\end{equation}
We are interested in computing the divergence of (\ref{eq:elas_newTheta}) with respect 
to the coordinate $x_\theta$ of the deformed domain. Within this framework, we observe that 
being $D_\theta$ the Jacobian of the transformation (\ref{eq:elas_transformation}) such 
that $\Omega_\theta \ni x_\theta = X_\theta(x) \ , \ x \in \Omega$, it is independent on 
the variable $x_\theta$. Let us now prove the following Piola identity:
\begin{equation}
\nabla_{x_\theta} \cdot \left( \Cof D_\theta^{-1} \right) = 0 .
\label{eq:elas_piolaId} 
\end{equation}
Using the Levi-Civita symbol $\varepsilon_{ijk}$ and the Einstein summation convention, 
the cofactor matrix of the inverse of the Jacobian $D_\theta$ has the form
$$
\left( \Cof D_\theta^{-1} \right)_{ij} = \frac{1}{2} \varepsilon_{imn} \varepsilon_{jpq} \frac{\partial (X_\theta^{-1})_m}{\partial (x_\theta)_p} \frac{\partial (X_\theta^{-1})_n}{\partial (x_\theta)_q} .
$$
Its divergence reads
$$
\begin{aligned}
\frac{\partial \left( \Cof D_\theta^{-1} \right)_{ij}}{\partial \left(x_\theta\right)_j} & = \frac{1}{2} \varepsilon_{imn} \varepsilon_{jpq} \left( \frac{\partial^2 \left(X_\theta^{-1}\right)_m}{\partial \left(x_\theta\right)_j \partial \left(x_\theta\right)_p} \frac{\partial \left(X_\theta^{-1}\right)_n}{\partial \left(x_\theta\right)_q} + \frac{\partial \left(X_\theta^{-1}\right)_m}{\partial \left(x_\theta\right)_p} \frac{\partial^2 \left(X_\theta^{-1}\right)_n}{\partial \left(x_\theta\right)_j \partial \left(x_\theta\right)_q} \right) \\
& = \frac{1}{2} \varepsilon_{imn} \left( \varepsilon_{pjq} \frac{\partial^2 \left(X_\theta^{-1}\right)_m}{\partial \left(x_\theta\right)_p \partial \left(x_\theta\right)_j} \frac{\partial \left(X_\theta^{-1}\right)_n}{\partial \left(x_\theta\right)_q} + \varepsilon_{qpj} \frac{\partial \left(X_\theta^{-1}\right)_m}{\partial \left(x_\theta\right)_p} \frac{\partial^2 \left(X_\theta^{-1}\right)_n}{\partial \left(x_\theta\right)_q \partial \left(x_\theta\right)_j} \right) \\
& = - \frac{1}{2} \varepsilon_{imn} \varepsilon_{jpq} \left( \frac{\partial^2 \left(X_\theta^{-1}\right)_m}{\partial \left(x_\theta\right)_j \partial \left(x_\theta\right)_p} \frac{\partial \left(X_\theta^{-1}\right)_n}{\partial \left(x_\theta\right)_q} + \frac{\partial \left(X_\theta^{-1}\right)_m}{\partial \left(x_\theta\right)_p} \frac{\partial^2 \left(X_\theta^{-1}\right)_n}{\partial \left(x_\theta\right)_j \partial \left(x_\theta\right)_q} \right) \\
& = - \frac{\partial \left( \Cof D_\theta^{-1} \right)_{ij}}{\partial \left(x_\theta\right)_j} ,
\end{aligned}
$$
where the third equality follows from the definition of the Levi-Civita 
symbol. Hence, we can conclude that (\ref{eq:elas_piolaId}) stands.
We may now compute the divergence of (\ref{eq:elas_newTheta}): 
$$
\begin{aligned}
\nabla_{x_\theta} \cdot \sigma_{\Omega_\theta} = \frac{\partial \left(\sigma_{\Omega_\theta}\right)_{ij}}{\partial \left(x_\theta\right)_j} e_i & = \frac{\partial}{\partial \left(x_\theta\right)_j} \left( \left( D_\theta \right)_{im} \left(\sigma_\Omega \circ X_\theta^{-1} \right)_{mq} \left( \Cof D_\theta^{-1} \right)_{qj} \right) e_i \\
& = \left( D_\theta \right)_{im} \frac{\partial \left(\sigma_\Omega\right)_{mn}}{\partial \left(x\right)_n} \frac{\partial \left(X_\theta^{-1}\right)_n}{\partial \left(x_\theta\right)_j} \left( \Cof D_\theta^{-1} \right)_{qj} e_i \\
& = \left( D_\theta \right)_{im} \frac{\partial \left(\sigma_\Omega\right)_{mn}}{\partial \left(x\right)_n} \left(D_\theta^{-1}\right)_{nj} \left( \Cof D_\theta^{-1} \right)_{qj} e_i \\
& = \frac{1}{\det D_\theta} \left( D_\theta \right)_{im} \frac{\partial \left(\sigma_\Omega\right)_{mn}}{\partial \left(x\right)_n} \delta_{nq} e_i
\end{aligned}
$$
where the last equality follows from (\ref{eq:invCof}).
Hence, it is straightforward to retrieve the result (\ref{eq:elas_divPiola}):
$$
\nabla_{x_\theta} \cdot \sigma_{\Omega_\theta} = \frac{1}{I_\theta} \left( D_\theta \right)_{im} \frac{\partial \left(\sigma_\Omega\right)_{mq}}{\partial \left(x\right)_q} e_i  = \frac{1}{I_\theta} D_\theta \nabla_x \cdot \sigma_\Omega
$$
\end{proof}
From now on, if there is no ambiguity 
we will assume that the differential operators act on the space to which the functions belong
and we will omit the subscript associated with the spatial coordinate used to compute the 
derivatives (e.g. $\nabla \cdot \sigma_{\Omega_\theta} = \nabla_{x_\theta} \cdot \sigma_{\Omega_\theta}$ 
and $\nabla \cdot \sigma_\Omega = \nabla_x \cdot \sigma_\Omega$).

As stated at the beginning of this section, in order to compute the shape gradients 
(\ref{eq:elas_minmaxGrad}), we have to express the functionals $J_2(\Omega_\theta)$ and 
$J_3(\Omega_\theta)$ in terms of the reference domain $\Omega$ and of functions defined 
solely on it.
Thus, in the following subsections we use the transformations (\ref{eq:elas_transformationHdiv}) 
and (\ref{eq:elas_transformationL2}) to map (\ref{eq:elas_saddle_mixed}) and 
(\ref{eq:elas_saddle_mixed_weak}) back to the reference domain and differentiate them 
with respect to $\theta$.

\subsection{The case of strongly enforced symmetry of the stress tensor}
\label{ref:strong_sym}

We consider the Hellinger-Reissner mixed variational formulation of the linear elasticity 
problem and the corresponding objective functional (\ref{eq:elas_saddle_mixed}).
We remark that the symmetry of the stress tensor $\sigma_{\Omega_\theta}$ is 
strongly enforced using the space 
$\Sigma_{\Omega_\theta}\coloneqq \{ \tau \in \hDivT{\mathbb{S}_d} \ : \ \tau n_\theta = g \ \text{on} \ \Gamma^N_\theta \ \text{and} \ \tau n = 0 \ \text{on} \ \Gamma_\theta \}$.
It is straightforward to observe that the transformation (\ref{eq:elas_transformationHdiv}) 
holds true for the space of $d \times d$ symmetric matrices $\mathbb{S}_d$, that is 
$\mathcal{Q}_\theta : \hDiv{\mathbb{S}_d} \rightarrow \hDivT{\mathbb{S}_d}$.
As a matter of fact, being $\tau_\Omega \in \hDiv{\mathbb{S}_d}$, it follows that 
$$
\left( \tau_{\Omega_\theta} \right)^T = \left( \frac{1}{I_\theta}D_\theta \tau_\Omega \circ X_\theta^{-1} D_\theta^T \right)^T = \frac{1}{I_\theta}D_\theta \tau_\Omega \circ X_\theta^{-1} D_\theta^T = \tau_{\Omega_\theta} .
$$

We use the definition of the compliance tensor in (\ref{eq:elas_hooke_inverse}) and we 
map the first term in (\ref{eq:elas_saddle_mixed}) to the reference domain $\Omega$ 
by means of the transformation (\ref{eq:elas_transformationHdiv}):
\begin{equation}
\begin{aligned}
\int_{\Omega_\theta}{\sigma_{\Omega_\theta} : \sigma_{\Omega_\theta} \ dx_\theta} & = \int_\Omega{\left( \sigma_{\Omega_\theta} \circ X_\theta \right) : \left( \sigma_{\Omega_\theta} \circ X_\theta \right) I_\theta \ dx} \\
& = \int_\Omega{\frac{1}{I_\theta^2} \left( D_\theta \sigma_\Omega D_\theta^T \right) : \left( D_\theta \sigma_\Omega D_\theta^T \right) I_\theta \ dx} \\
& = \int_\Omega{\frac{1}{I_\theta} D_\theta^T D_\theta \sigma_\Omega D_\theta^T D_\theta : \sigma_\Omega \ dx} ,
\end{aligned}
\label{eq:elas_hr_pt1}
\end{equation}
where the last equality follows from the definition of the Frobenius product and the cyclic 
property of the trace.
In a similar fashion, we obtain
\begin{equation}
\begin{aligned}
\int_{\Omega_\theta}{\tr(\sigma_{\Omega_\theta})\tr(\sigma_{\Omega_\theta}) \ dx_\theta} & = \int_\Omega{\tr\left( \sigma_{\Omega_\theta} \circ X_\theta \right)\tr\left( \sigma_{\Omega_\theta} \circ X_\theta \right) I_\theta \ dx} \\
& = \int_\Omega{\frac{1}{I_\theta^2} \tr\left( D_\theta \sigma_\Omega D_\theta^T \right) \tr\left( D_\theta \sigma_\Omega D_\theta^T \right) I_\theta \ dx} \\
& = \int_\Omega{\frac{1}{I_\theta} \tr\left( D_\theta^T D_\theta \sigma_\Omega \right) \tr\left( D_\theta^T D_\theta \sigma_\Omega \right) dx} .
\end{aligned}
\label{eq:elas_hr_pt2}
\end{equation}
We consider now the second term in (\ref{eq:elas_saddle_mixed}). 
Owing to (\ref{eq:elas_divPiola}) and (\ref{eq:elas_transformationL2}) it follows
\begin{equation}
\begin{aligned}
\int_{\Omega_\theta}{\left(\nabla \cdot \sigma_{\Omega_\theta} \right) \cdot u_{\Omega_\theta} \ dx_\theta} & = \int_\Omega{\left(\nabla \cdot \left( \sigma_{\Omega_\theta} \circ X_\theta \right) \right) \cdot \left( u_{\Omega_\theta} \circ X_\theta \right) I_\theta \ dx} \\
& = \int_\Omega{\frac{1}{I_\theta} \left( D_\theta \nabla \cdot \sigma_\Omega \right) \cdot \left( D_\theta^{-T} u_\Omega \right) I_\theta \ dx} 
= \int_\Omega{\left( \nabla \cdot \sigma_\Omega \right) \cdot u_\Omega \ dx} ,
\end{aligned}
\label{eq:elas_hr_pt3} 
\end{equation}
\begin{equation}
\int_{\Omega_\theta}{ f \cdot u_{\Omega_\theta} \ dx_\theta} = 
\int_\Omega{ f \circ X_\theta \cdot \left( u_{\Omega_\theta} \circ X_\theta \right) I_\theta \ dx} = 
\int_\Omega{ f \circ X_\theta \cdot \left( D_\theta^{-T} u_\Omega \right) I_\theta \ dx} .
\label{eq:elas_hr_pt4}
\end{equation}

By combining the above information, 
we obtain the following min-max function which no longer depends on the space $\Omega_\theta$:
\begin{equation}
\begin{aligned}
j_2(\theta) = \adjustlimits\inf_{\sigma_\Omega \in \Sigma_\Omega} \sup_{u_\Omega \in V_\Omega} \frac{1}{2\mu} & \int_\Omega{\frac{1}{I_\theta} D_\theta^T D_\theta \sigma_\Omega D_\theta^T D_\theta : \sigma_\Omega \ dx} \\
& - \frac{\lambda}{2\mu(d\lambda+2\mu)} \int_\Omega{\frac{1}{I_\theta} \tr\left( D_\theta^T D_\theta \sigma_\Omega \right) \tr\left( D_\theta^T D_\theta \sigma_\Omega \right) dx} \\
& + \int_\Omega{\left( \nabla \cdot \sigma_\Omega \right) \cdot u_\Omega \ dx} + \int_\Omega{ f \circ X_\theta \cdot \left( D_\theta^{-T} u_\Omega \right) I_\theta \ dx} .
\end{aligned}
\label{eq:elas_minimax1}
\end{equation}
We may now exploit (\ref{eq:elas_jac}), (\ref{eq:elas_jacT}) and (\ref{eq:elas_develop_det})
to differentiate (\ref{eq:elas_minimax1}) with respect to $\theta$ and evaluate 
the resulting quantity in $\theta=0$. Thus, the shape gradient of the compliance 
using the Hellinger-Reissner dual mixed variational formulation for the linear elasticity 
problem reads as
\begin{equation}
\begin{aligned}
\langle dJ_2(\Omega),\theta \rangle = & \frac{1}{\mu} \int_\Omega{N(\theta) \sigma_\Omega : \sigma_\Omega \ dx} 
- \frac{\lambda}{\mu(d\lambda+2\mu)} \int_\Omega{\tr\left( N(\theta) \sigma_\Omega \right) \tr\left( \sigma_\Omega \right) dx} \\
& + \int_\Omega{\left( \nabla f \theta \cdot u_\Omega + f \cdot u_\Omega (\nabla \cdot \theta) - f \cdot (\nabla \theta^T u_\Omega) \right) dx} 
\end{aligned}
\label{eq:elas_shapeGrad_mixed1}
\end{equation}
where $N(\theta) \coloneqq \nabla \theta + \nabla \theta^T - \frac{1}{2} (\nabla \cdot \theta) \Id$.

\subsection{The case of weakly enforced symmetry of the stress tensor}
\label{ref:weak_sym}

The dual mixed formulation of the linear elasticity problem discussed in subsection 
\ref{ref:mixed_weak} is characterized by the weak imposition of the symmetry of 
the stress tensor through a Lagrange multiplier $\eta_{\Omega_\theta}$.
Thus, besides the spaces $V_{\Omega_\theta}$ and $\Sigma_{\Omega_\theta}$, the functional 
(\ref{eq:elas_saddle_mixed_weak}) associated with the minimization of the compliance 
using the aforementioned framework introduces the additional space 
$Q_{\Omega_\theta} \coloneqq \lTwoT{\mathbb{K}_d}$ of the $d \times d$ skew-symmetric square-integrable tensors.
In order to map the space $\lTwo{\mathbb{K}_d}$ to $\lTwoT{\mathbb{K}_d}$, we 
use the previously introduced transformation (\ref{eq:elas_transformationHdiv}):
it is straightforward to observe that given $\eta_\Omega \in \lTwo{\mathbb{K}_d}$, the transported 
$\eta_{\Omega_\theta} = \mathcal{Q}_\theta(\eta_\Omega)$ is skew-symmetric:
$$
\left( \eta_{\Omega_\theta} \right)^T = \left( \frac{1}{I_\theta}D_\theta \eta_\Omega \circ X_\theta^{-1} D_\theta^T \right)^T = \frac{1}{I_\theta}D_\theta \left( \eta_\Omega \circ X_\theta^{-1} \right)^T D_\theta^T = - \frac{1}{I_\theta}D_\theta \eta_\Omega \circ X_\theta^{-1} D_\theta^T = - \eta_{\Omega_\theta} .
$$

The first two integrals in (\ref{eq:elas_saddle_mixed_weak}) may be treated as in the previous 
subsection and the manipulations that lead to (\ref{eq:elas_hr_pt1}), (\ref{eq:elas_hr_pt2}), (\ref{eq:elas_hr_pt3}) 
and (\ref{eq:elas_hr_pt4}) stand. Let us now map the remaining term in (\ref{eq:elas_saddle_mixed_weak}) 
back to the reference domain $\Omega$:
\begin{equation}
\begin{aligned}
\int_{\Omega_\theta}{\sigma_{\Omega_\theta} : \eta_{\Omega_\theta} \ dx_\theta} & = \int_\Omega{\left( \sigma_{\Omega_\theta} \circ X_\theta \right) : \left( \eta_{\Omega_\theta} \circ X_\theta \right) I_\theta \ dx} \\
& = \int_\Omega{\frac{1}{I_\theta^2} \left( D_\theta \sigma_\Omega D_\theta^T \right) : \left( D_\theta \eta_\Omega D_\theta^T \right) I_\theta \ dx} \\
& = \int_\Omega{\frac{1}{I_\theta} D_\theta^T D_\theta \sigma_\Omega D_\theta^T D_\theta : \eta_\Omega \ dx} .
\end{aligned}
\label{eq:elas_weak_pt5}
\end{equation}
We combine (\ref{eq:elas_hr_pt1}), (\ref{eq:elas_hr_pt2}), (\ref{eq:elas_hr_pt3}), (\ref{eq:elas_hr_pt4}) 
and (\ref{eq:elas_weak_pt5}) to obtain the min-max function associated with $J_3(\Omega_\theta)$  
and defined on a space that does not depend on $\theta$:
\begin{equation}
\begin{aligned}
j_3(\theta) = \adjustlimits\inf_{\sigma_\Omega \in \Sigma_\Omega} \sup_{(u_\Omega,\eta_\Omega) \in W_\Omega} \frac{1}{2\mu} & \int_\Omega{\frac{1}{I_\theta} D_\theta^T D_\theta \sigma_\Omega D_\theta^T D_\theta : \sigma_\Omega \ dx} \\
& - \frac{\lambda}{2\mu(d\lambda+2\mu)} \int_\Omega{\frac{1}{I_\theta} \tr\left( D_\theta^T D_\theta \sigma_\Omega \right) \tr\left( D_\theta^T D_\theta \sigma_\Omega \right) dx} \\
& + \frac{1}{2\mu} \int_\Omega{\frac{1}{I_\theta} D_\theta^T D_\theta \sigma_\Omega D_\theta^T D_\theta : \eta_\Omega \ dx} \\
& + \int_\Omega{\left( \nabla \cdot \sigma_\Omega \right) \cdot u_\Omega \ dx} + \int_\Omega{ f \circ X_\theta \cdot \left( D_\theta^{-T} u_\Omega \right) I_\theta \ dx} .
\end{aligned}
\label{eq:elas_minimax2}
\end{equation}
Let us consider the matrix $N(\theta)$ introduced in the previous subsection.
By differentiating (\ref{eq:elas_minimax2}) with respect to $\theta$ in $\theta=0$, we obtain 
the following expression of the shape gradient of the compliance using the dual mixed variational 
formulation for the linear elasticity with weakly imposed symmetry of the stress tensor:
\begin{equation}
\begin{aligned}
\langle dJ_3(\Omega),\theta \rangle = & \frac{1}{2\mu} \int_\Omega{\left( N(\theta) \sigma_\Omega : \sigma_\Omega + \sigma_\Omega N(\theta) : \sigma_\Omega \right) dx} \\
& - \frac{\lambda}{\mu(d\lambda+2\mu)} \int_\Omega{\tr\left( N(\theta) \sigma_\Omega \right) \tr\left( \sigma_\Omega \right) dx} \\
& + \frac{1}{2\mu} \int_\Omega{\left( N(\theta) \sigma_\Omega : \eta_\Omega + \sigma_\Omega N(\theta) : \eta_\Omega \right) dx} \\
& + \int_\Omega{\left( \nabla f \theta \cdot u_\Omega + f \cdot u_\Omega (\nabla \cdot \theta) - f \cdot (\nabla \theta^T u_\Omega) \right) dx} .
\end{aligned}
\label{eq:elas_shapeGrad_mixed2}
\end{equation}

We remark that the two expressions of the shape gradient obtained 
using the dual mixed variational formulations in subsections \ref{ref:strong_sym} 
and \ref{ref:weak_sym} are equivalent:
\begin{lemma}
Let us consider a symmetric stress tensor $\sigma_\Omega \in \hDiv{\mathbb{S}_d}$.
Then (\ref{eq:elas_shapeGrad_mixed1}) and (\ref{eq:elas_shapeGrad_mixed2}) are equal.
\end{lemma}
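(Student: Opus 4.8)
The plan is to prove the equality by comparing \eqref{eq:elas_shapeGrad_mixed1} and \eqref{eq:elas_shapeGrad_mixed2} term by term and showing that every discrepancy vanishes once the symmetry of $\sigma_\Omega$ is invoked. First I would note that the trace contribution $-\frac{\lambda}{\mu(d\lambda+2\mu)}\int_\Omega \tr(N(\theta)\sigma_\Omega)\tr(\sigma_\Omega)\,dx$ and the entire block of force terms $\int_\Omega (\nabla f\,\theta \cdot u_\Omega + f\cdot u_\Omega(\nabla\cdot\theta) - f\cdot(\nabla\theta^T u_\Omega))\,dx$ occur verbatim in both expressions, so they cancel identically in the difference. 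The argument therefore reduces to two points: matching the purely quadratic terms in $\sigma_\Omega$, and disposing of the term carrying the Lagrange multiplier $\eta_\Omega$, which is present only in \eqref{eq:elas_shapeGrad_mixed2}.

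For the multiplier term $\frac{1}{2\mu}\int_\Omega \left( N(\theta)\sigma_\Omega : \eta_\Omega + \sigma_\Omega N(\theta):\eta_\Omega \right) dx = \frac{1}{2\mu}\int_\Omega \left( N(\theta)\sigma_\Omega + \sigma_\Omega N(\theta) \right):\eta_\Omega\,dx$, the key observation is that $N(\theta) = \nabla\theta + \nabla\theta^T - \frac12(\nabla\cdot\theta)\Id$ is symmetric by construction and $\sigma_\Omega$ is symmetric by hypothesis, whence $N(\theta)\sigma_\Omega + \sigma_\Omega N(\theta)$ is a symmetric matrix: indeed $(N(\theta)\sigma_\Omega + \sigma_\Omega N(\theta))^T = \sigma_\Omega N(\theta) + N(\theta)\sigma_\Omega$. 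Since $\eta_\Omega \in \lTwo{\mathbb{K}_d}$ is skew-symmetric, and the Frobenius product of a symmetric with a skew-symmetric matrix vanishes pointwise, this integral is zero. I would then treat the quadratic terms by establishing the pointwise identity $N(\theta)\sigma_\Omega:\sigma_\Omega = \sigma_\Omega N(\theta):\sigma_\Omega$, which uses only the symmetry of $\sigma_\Omega$: writing the Frobenius product as $A:B = \tr(A^T B)$ and using $\sigma_\Omega^T = \sigma_\Omega$ together with the cyclic invariance of the trace, both sides collapse to $\tr(N(\theta)^T\sigma_\Omega^2)$. Consequently $\frac{1}{2\mu}\int_\Omega \left( N(\theta)\sigma_\Omega:\sigma_\Omega + \sigma_\Omega N(\theta):\sigma_\Omega \right) dx = \frac{1}{\mu}\int_\Omega N(\theta)\sigma_\Omega:\sigma_\Omega\,dx$, which matches the quadratic term of \eqref{eq:elas_shapeGrad_mixed1} exactly, and the equality of the two shape gradients follows.

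I do not expect a genuine obstacle here, as the whole argument is a direct pointwise algebraic manipulation resting on three elementary facts: the manifest symmetry of $N(\theta)$, the symmetry of $\sigma_\Omega$ assumed in the statement, and the orthogonality of $\mathbb{S}_d$ and $\mathbb{K}_d$ under the Frobenius product. The step most susceptible to index or sign slips is the trace computation yielding $N(\theta)\sigma_\Omega:\sigma_\Omega = \sigma_\Omega N(\theta):\sigma_\Omega$, so I would double-check it in index notation, relying on the fact that $\sigma_\Omega^2$ inherits the symmetry of $\sigma_\Omega$.
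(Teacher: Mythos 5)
Your proposal is correct and follows essentially the same route as the paper's own proof: the same three-way decomposition (common terms, quadratic terms in $\sigma_\Omega$ handled via symmetry of $N(\theta)$ and $\sigma_\Omega$ with the cyclic trace identity, and the multiplier term killed by the orthogonality of symmetric and skew-symmetric matrices under the Frobenius product). Your phrasing of the last step, observing that $N(\theta)\sigma_\Omega + \sigma_\Omega N(\theta)$ is symmetric and hence annihilated pointwise by the skew-symmetric $\eta_\Omega$, is a slightly more explicit rendering of exactly the argument the paper invokes.
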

\begin{proof}
It is straightforward to observe that the second and the fourth integrals in 
(\ref{eq:elas_shapeGrad_mixed2}) correspond to the last two terms in (\ref{eq:elas_shapeGrad_mixed1}).
Moreover, owing to the symmetry of $N(\theta)$ and $\sigma_\Omega$, we get:
$$
\begin{aligned}
\int_\Omega{\left( N(\theta) \sigma_\Omega : \sigma_\Omega + \sigma_\Omega N(\theta) : \sigma_\Omega \right) dx} 
& = \int_\Omega{\left( \tr\left(N(\theta) \sigma_\Omega \sigma_\Omega^T \right) + \tr\left(N(\theta)^T \sigma_\Omega^T \sigma_\Omega \right) \right) dx} \\
& = \int_\Omega{ 2 \tr\left(N(\theta) \sigma_\Omega \sigma_\Omega^T \right) dx} = \int_\Omega{ 2 N(\theta) \sigma_\Omega : \sigma_\Omega \ dx} .
\end{aligned}
$$
In order to prove the equality $\langle dJ_2(\Omega),\theta \rangle = \langle dJ_3(\Omega),\theta \rangle$, 
we have to show that the following quantity is equal to zero:
$$
\int_\Omega{\left( N(\theta) \sigma_\Omega : \eta_\Omega + \sigma_\Omega N(\theta) : \eta_\Omega \right) dx} = 
\int_\Omega{\left( \tr\left( N(\theta) \sigma_\Omega \eta_\Omega^T \right) + \tr\left( N(\theta)^T \sigma_\Omega^T \eta_\Omega \right) \right) dx} .
$$
The result follows directly from the symmetry of the matrix $N(\theta)$, 
the symmetry of $\sigma_\Omega$ and the skew-symmetry of $\eta_\Omega$.
\end{proof}

\section{Qualitative assessment of the discretized shape gradients via numerical simulations}
\label{ref:shapeGrad_comparison}

In this section, we provide some numerical simulations to present a preliminary comparison of the 
expressions of the shape gradient of the compliance derived 
using different formulations of the linear elasticity problem. 
As mentioned in subsection \ref{ref:mixed}, a major drawback of the Hellinger-Reissner 
variational formulation for the linear elasticity equation is the complexity of the 
stable Arnold-Winther pair of Finite Element spaces associated with this discretization 
(cf. \cite{MR1930384}).
Hence, for the scope of this section, we restrict ourselves to the expression 
of the shape gradient obtained by the pure displacement 
formulation (cf. sections \ref{ref:pureDisp} and \ref{ref:shapeGrad_H1}) 
and to the one arising from the dual mixed formulation with weakly imposed symmetry of 
the stress tensor (cf. sections \ref{ref:mixed_weak} and \ref{ref:weak_sym}).
\\
We consider the optimal design of the classical cantilever beam described in figure 
\ref{fig:cantilever-scheme}.
In particular, we assume a zero body forces configuration, a structure clamped on 
$\Gamma^D$, with a load $g=(0,-1)$ applied on $\Gamma^N$ and a free boundary $\Gamma$.

\begin{figure}[htbp]
\centering
\includegraphics[width=0.5\columnwidth]{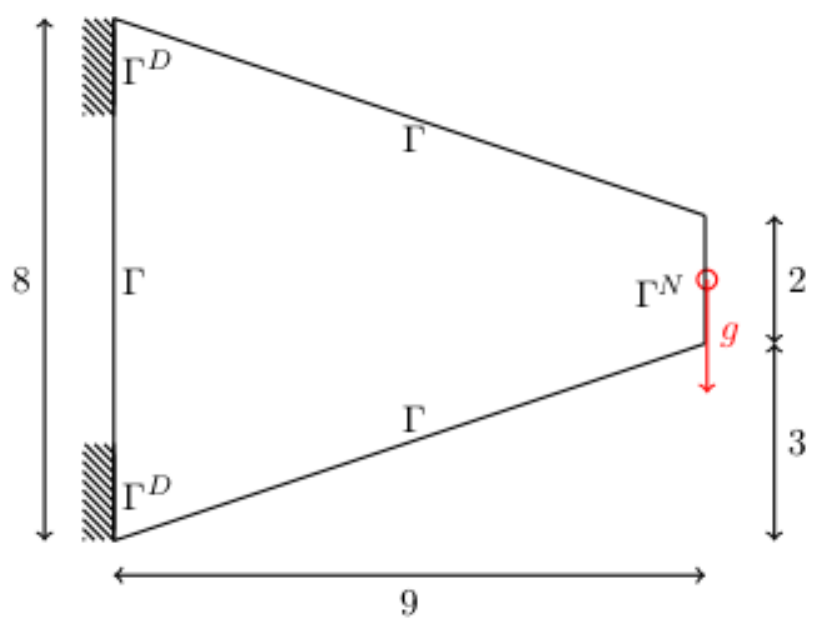}
\caption{Scheme of a 2D cantilever beam clamped on $\Gamma^D$, with a load $g$ applied on the boundary $\Gamma^N$ and free boundaries $\Gamma$.}
\label{fig:cantilever-scheme}
\end{figure}

\begin{figure}[htbp]
    \subfloat[Bulky structure.]
    {
    \begin{minipage}{0.38\columnwidth}
      \includegraphics[width=\columnwidth]{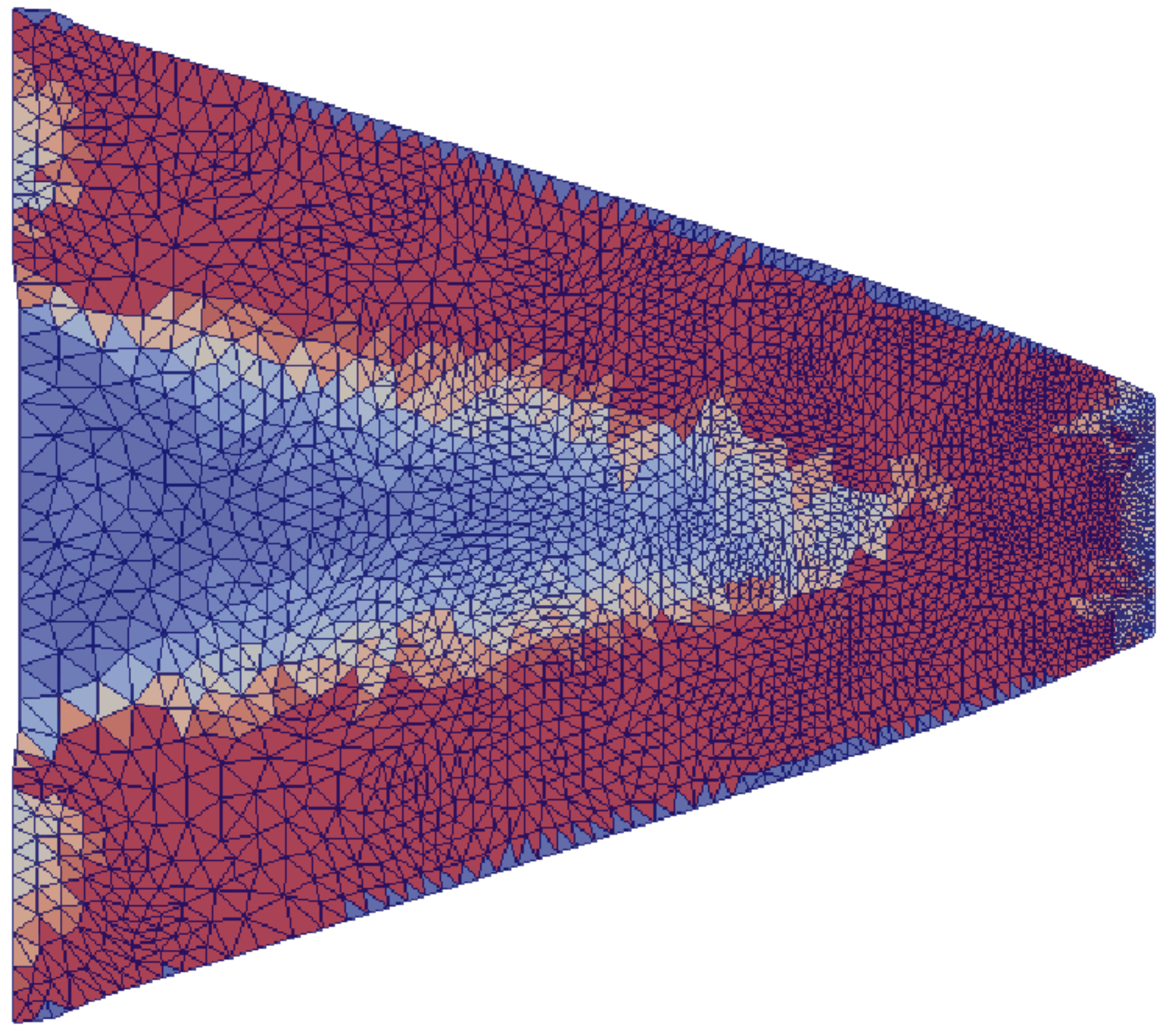}
    \end{minipage}
    \begin{minipage}{0.1\columnwidth}
      \includegraphics[width=\columnwidth]{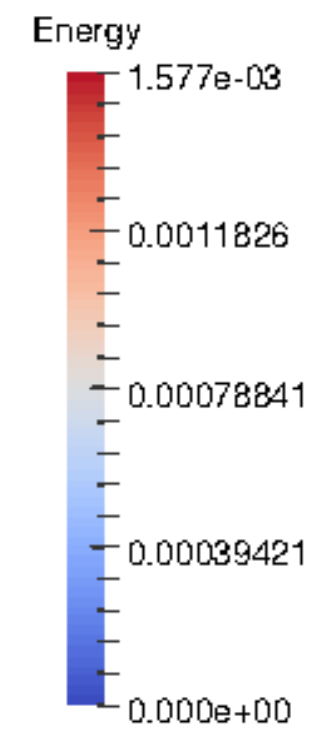}
    \end{minipage}
    \label{fig:initialNO}
    }
    \hfil
    \subfloat[Structure with six holes.]
    {
    \begin{minipage}{0.38\columnwidth}    
      \includegraphics[width=\columnwidth]{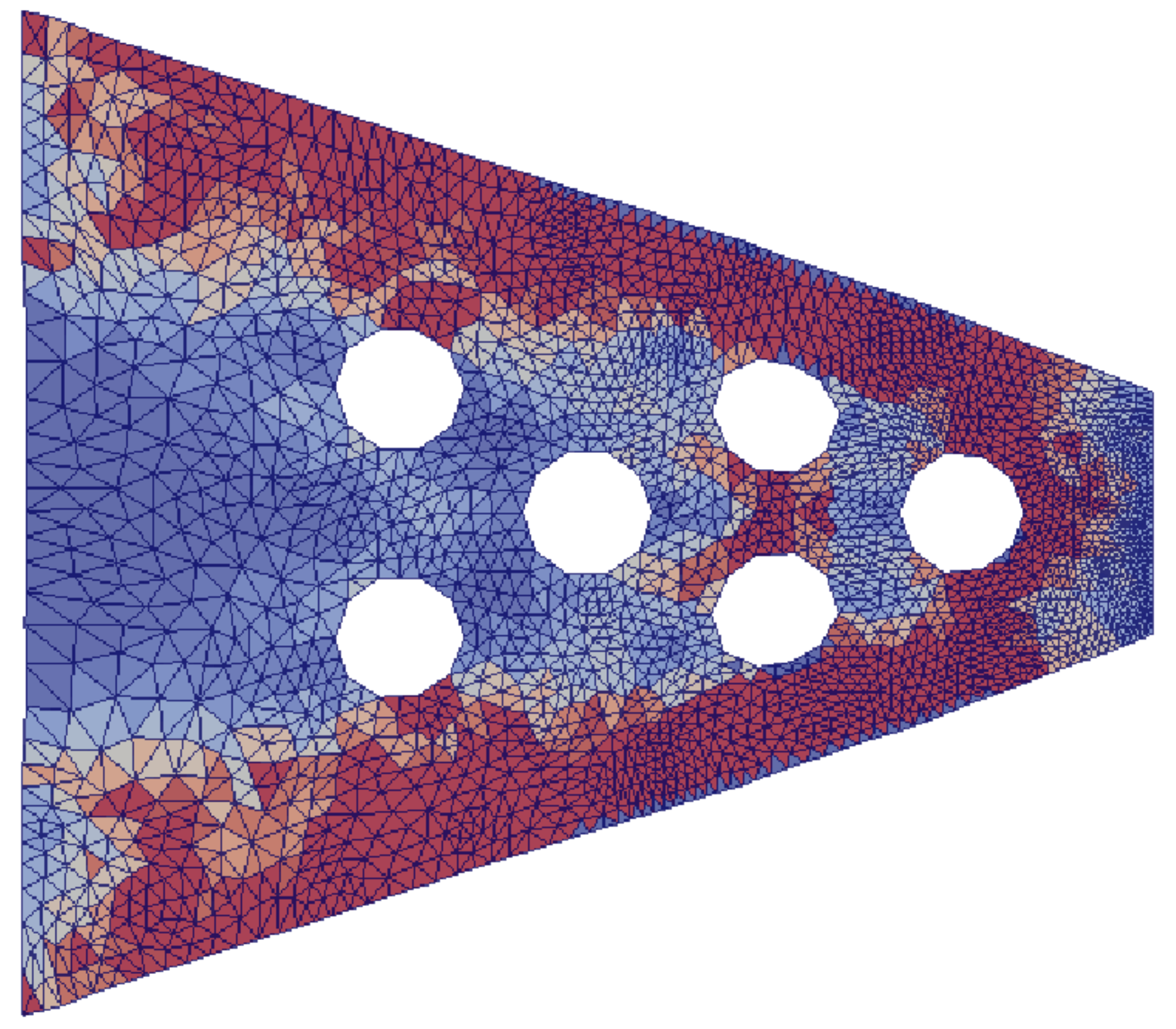}
    \end{minipage}
    \begin{minipage}{0.1\columnwidth}
      \includegraphics[width=\columnwidth]{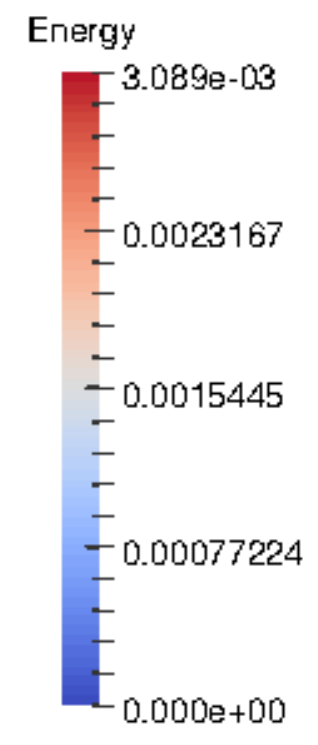}
    \end{minipage}
    \label{fig:initial6}
    }

    \caption{Initial shape and computational mesh for (a) a bulky cantilever and 
    (b) a structure featuring six holes. Density distribution of the elastic energy 
    within the range (a) $(0,1.5 \cdot 10^{-3})$ and 
    (b) $(0,3 \cdot 10^{-3})$, the lower values being in blue and 
    the higher ones in red.}
    \label{fig:initialCanti}
\end{figure}

\begin{figure}[hbtp]
\centering
\includegraphics[width=0.4\columnwidth]{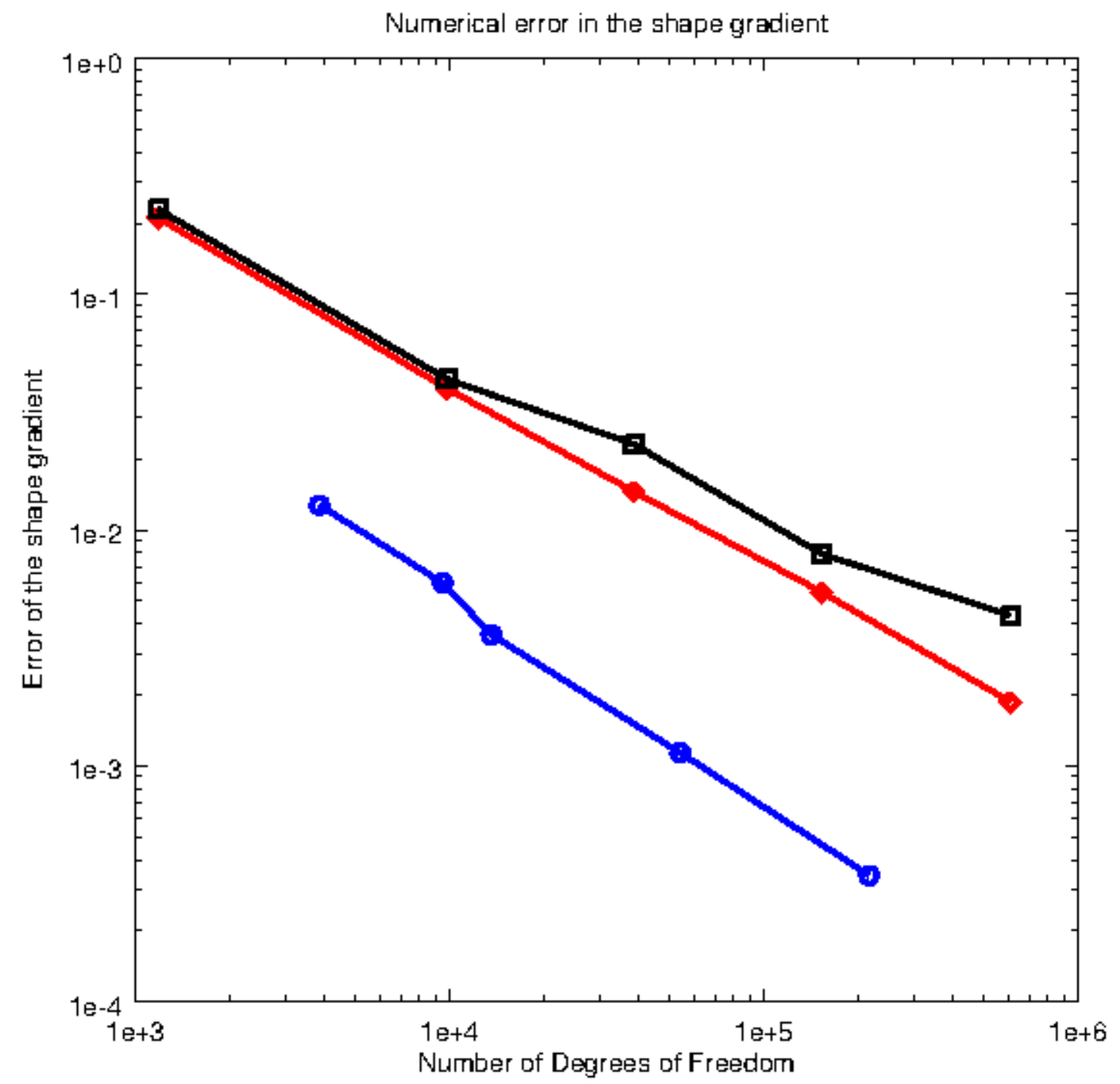}
\caption{Experimental convergence rate of the error in the shape gradient 
computed using the surface expression based on the pure displacement formulation 
(black squares), the corresponding volumetric expression (red diamond) 
and the dual mixed formulation (blue circle) with respect to the 
number of Degrees of Freedom.}
\label{fig:convShapeGrad}
\end{figure}

\subsection{Experimental analysis of the convergence of the error in the shape gradient}

In order to establish an experimental convergence rate for the discretization error 
associated with the approximation of the pure displacement and the dual mixed formulations 
of the linear elasticity problem, we consider the cantilever beam described in figure 
\ref{fig:cantilever-scheme}. In particular, we consider the domain featuring six 
holes depicted in figure \ref{fig:initial6}.
Owing to the fact that the analytical solution of the linear elasticity problem on the 
aforementioned domain $\Omega$ is not known, we solve the linear elasticity problem on an 
extremely fine mesh and we consider the resulting solution as the exact solution of the problem 
under analysis.
The discretization of the pure displacement formulation of the state problem is performed 
using $\mathbb{P}^1 \times \mathbb{P}^1$ Finite Element functions to approximate the 
displacement field.
For the dual mixed formulation, we consider the scheme described in subsection 
\ref{ref:mixed_weak} and we approximate the stress tensor using $BDM_1 \times BDM_1$ 
Finite Elements, the displacement field via $\mathbb{P}^0 \times \mathbb{P}^0$ and 
the Lagrange multiplier by means of a $\mathbb{P}^0$ function.
\\
In figure \ref{fig:convShapeGrad}, we present the convergence history of the 
discretization error in the shape gradient with respect to the number of Degrees 
of Freedom using the surface expression based on the pure displacement formulation 
and the volumetric expressions previously derived.
In particular, we observe that under uniform mesh refinements the surface expression 
based on the pure displacement formulation is less accurate and presents a slower 
convergence rate than the corresponding volumetric one.
Moreover, using the dual mixed formulation the numerical error in the shape gradient 
is furtherly lowered and the blue curve seems slightly steeper than the red one. 
Thus, from the numerical experiments it seems that the volumetric shape gradient 
obtained from the dual mixed formulation of the problem may provide better 
convergence rate than the corresponding expression based on the pure displacement formulation.
Nevertheless, this conjecture remains to be proved and a rigorous analysis 
of the convergence rate by means of \emph{a priori} estimates of the 
error in the shape gradient is necessary.

\subsection{Boundary Variation Algorithm using the pure displacement and the dual mixed formulations}

In this subsection, we apply the Boundary Variation Algorithm described in subsection \ref{ref:BVA} to minimize the 
compliance of the cantilever in figure \ref{fig:cantilever-scheme} under a volume constraint. 
In particular, the volume of the structure under analysis is set to its initial value $V_0$ and we aim 
to construct an optimal shape that minimizes the compliance while preserving as much as possible 
the value $V_0$ of the volume.
As discussed in section \ref{ref:compliane_minimization}, the volume constraint is handled through 
a Lagrange multiplier $\gamma$.
From a theoretical point of view, the value of the Lagrange multiplier should be updated at each 
iteration in order for the optimal shape to fulfill the volume constraint when the algorithm converges.
Nevertheless, enforcing the volume constraint at each iteration would highly increase the complexity of 
the algorithm and consequently its computational cost. Thus we consider a constant Lagrange multiplier 
at each iteration of the strategy and starting from the previously computed value $\gamma$, we increase 
it if the current volume $V$ is greater than the target $V_0$ and we decrease it otherwise.

As extensively discussed in \cite{1742-6596-657-1-012004, giacomini:hal-01201914, giacomini:flux}, 
a key aspect of shape optimization procedures is the choice of the criterion to stop the 
evolution of the optimization strategy. 
In order to compare the expressions (\ref{eq:elas_shapeGrad_H1}) and (\ref{eq:elas_shapeGrad_mixed2}) 
of the shape gradient of the compliance, we consider an \emph{a priori} fixed number of 
iterations for the BVA under analysis.
Moreover, the number of connected regions inside the domain is set at the beginning of the procedure 
and the deformation of the shape is performed via a moving mesh approach.
In the rest of this subsection, we present two test cases for the optimal design of the cantilever 
in figure \ref{fig:cantilever-scheme}, that is a bulky structure (Fig. \ref{fig:initialNO}) and a 
porous one featuring six internal holes (Fig. \ref{fig:initial6}).
All the numerical simulations are obtained using FreeFem++ \cite{MR3043640}.

\subsubsection*{Bulky cantilever beam}

We consider the initial configuration in figure \ref{fig:initialNO}.
The volume of the structure under analysis is $V_0 = 45$ and we set the 
initial value of the Lagrange multiplier to $\gamma_0 = 0.1$.
In figure \ref{fig:NOholes}, we present the shapes obtained using the 
Boundary Variation Algorithm based on the expressions 
(\ref{eq:elas_shapeGrad_H1}) and (\ref{eq:elas_shapeGrad_mixed2}) of the 
shape gradient of the compliance. 
In particular, we remark that the variant of the BVA which exploits the 
shape gradient computed via the dual mixed variational formulation of the 
linear elasticity problem is able to construct configurations in which 
the total elastic energy is lower than in the corresponding cases 
obtained starting from the pure displacement formulation of the problem. 
This remark is confirmed by the comparison plots in figure 
\ref{fig:NOholes-comparison} where the BVA based on the dual mixed formulation 
is depicted by blue curves whereas the red ones represent the 
results obtained starting from the pure displacement formulation.
As a matter of fact, the former approach appears more robust than the 
latter one: the BVA based on the dual mixed formulation improves both the 
compliance and the functional $L(\Omega)$ during several iterations, 
whereas at the beginning of the evolution, 
the variant exploiting the pure displacement formulation reduces 
the compliance by enlarging the volume of the structure, thus 
deteriorating the corresponding value of $L(\Omega)$ (Fig. \ref{fig:NO-penal}). 
In a second phase, the BVA based on the pure displacement formulation is able 
to better control the variation of the volume and the final shapes obtained 
by the two algorithms have comparable sizes (Fig. \ref{fig:NO-vol}). 
Nevertheless, the overall improvement of the compliance is far more 
limited when using the pure displacement formulation with respect to the one 
observed starting from the dual mixed formulation (Fig. \ref{fig:NO-compli}).

\begin{figure}[htbp]
\begin{minipage}{0.88\columnwidth}
    \centering
    \subfloat[10 iterations.]
    {
    \includegraphics[width=0.3 \columnwidth]{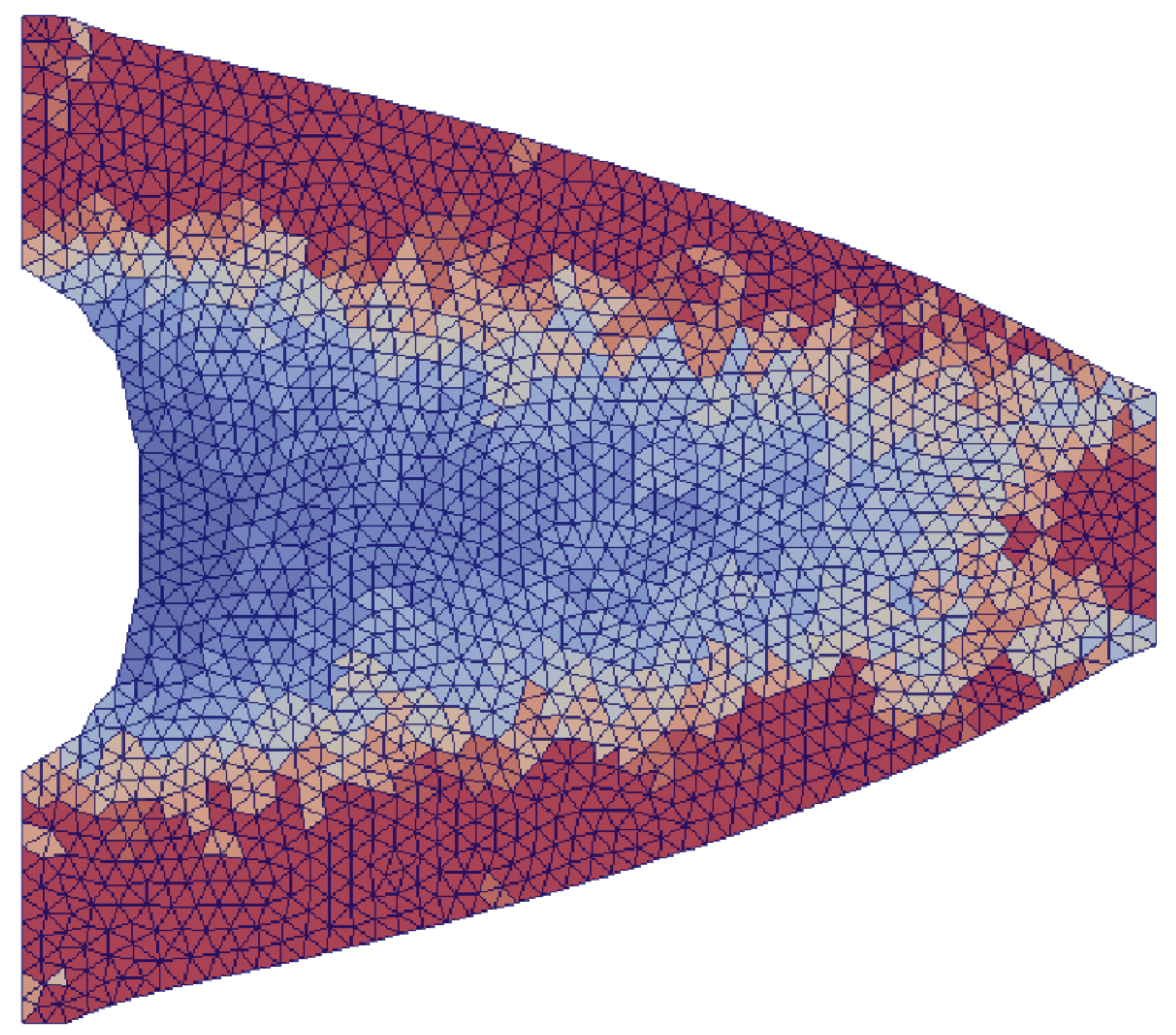}
    \label{fig:H1-NO-10}
    }
    \hfil
    \subfloat[20 iterations.]
    {
    \includegraphics[width=0.3 \columnwidth]{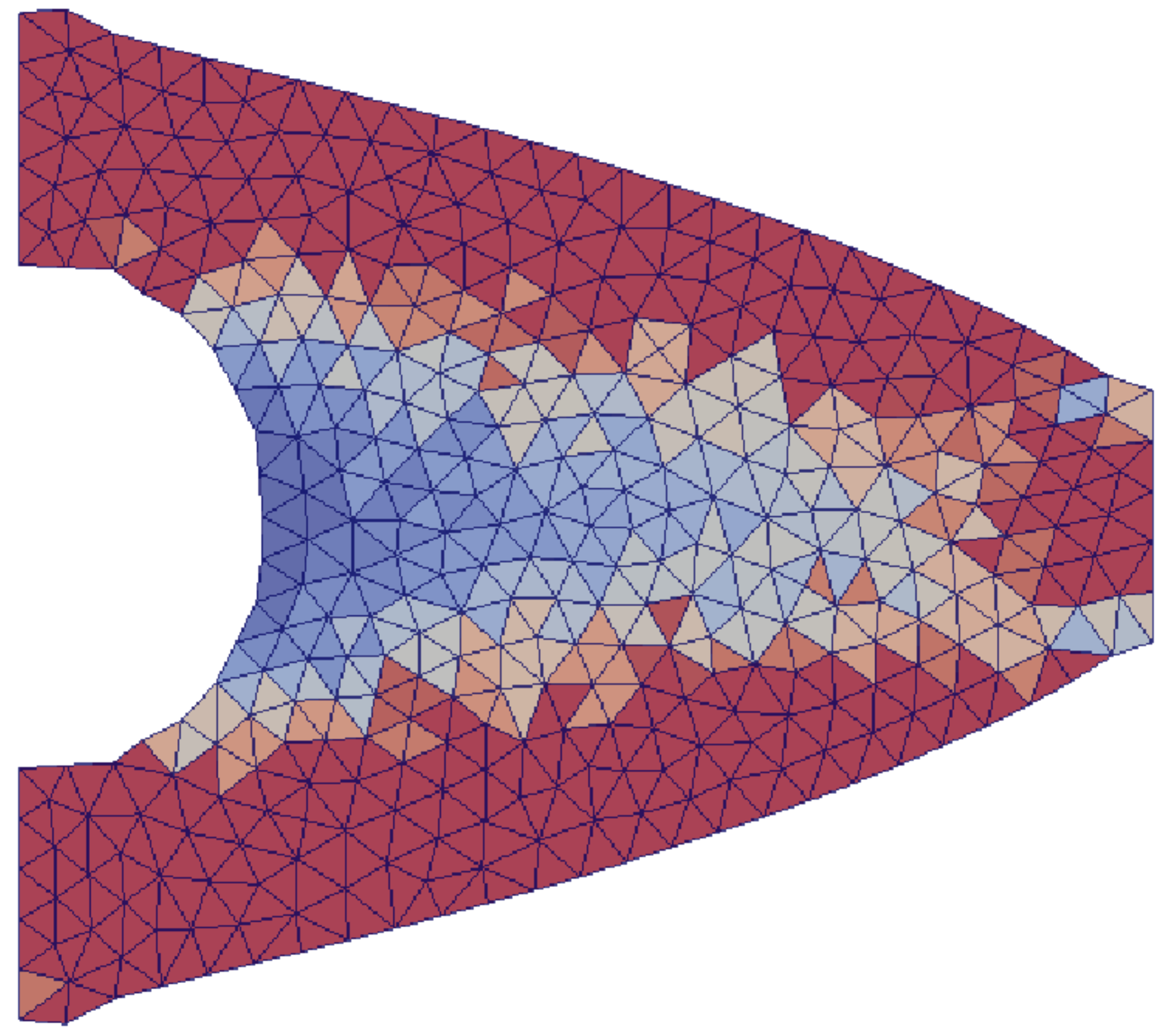}
    \label{fig:H1-NO-20}
    }
    \hfil
    \subfloat[30 iterations.]
    {
    \includegraphics[width=0.3 \columnwidth]{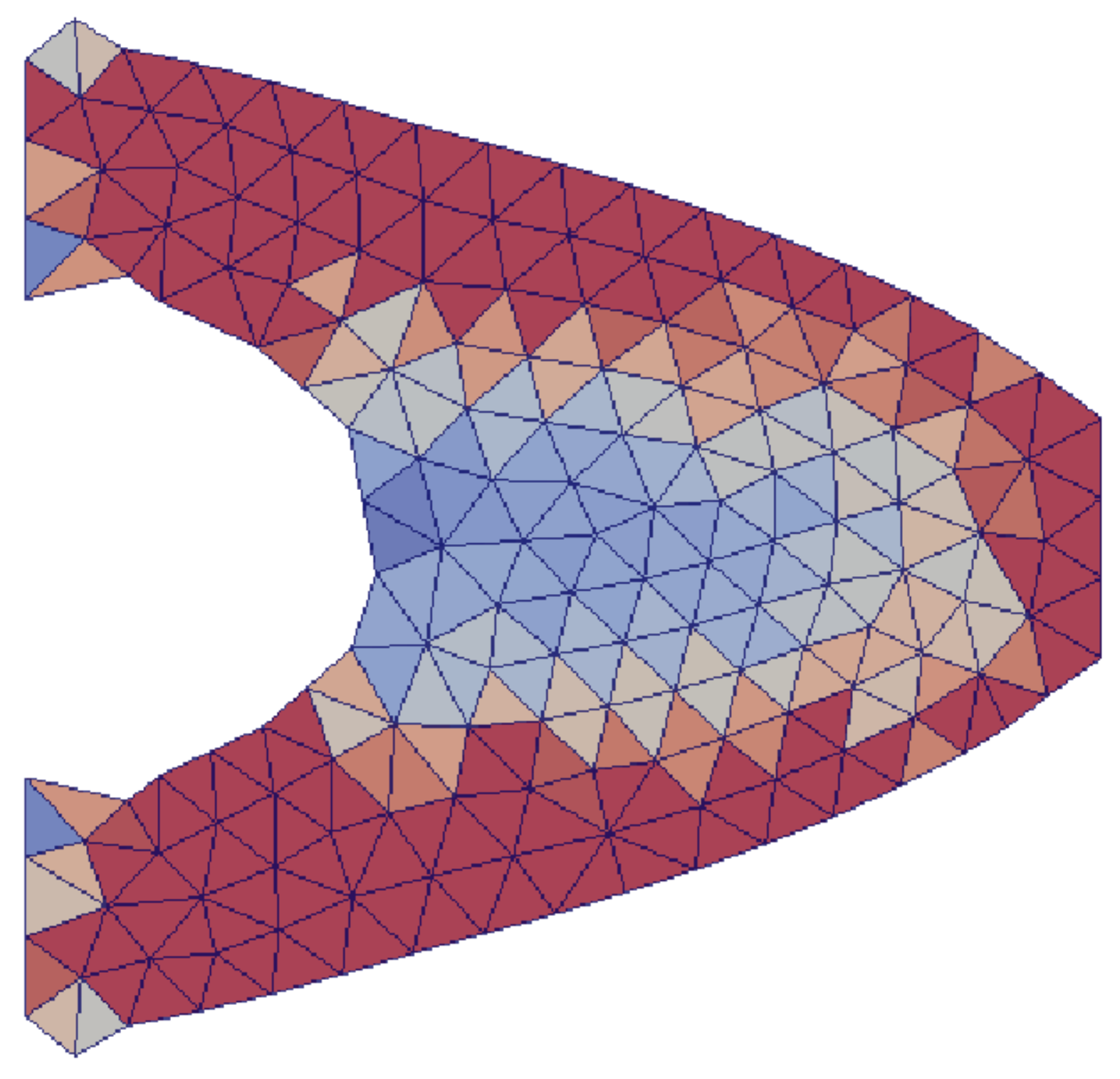}
    \label{fig:H1-NO-30}
    }
    
    \subfloat[10 iterations.]
    {
    \includegraphics[width=0.3 \columnwidth]{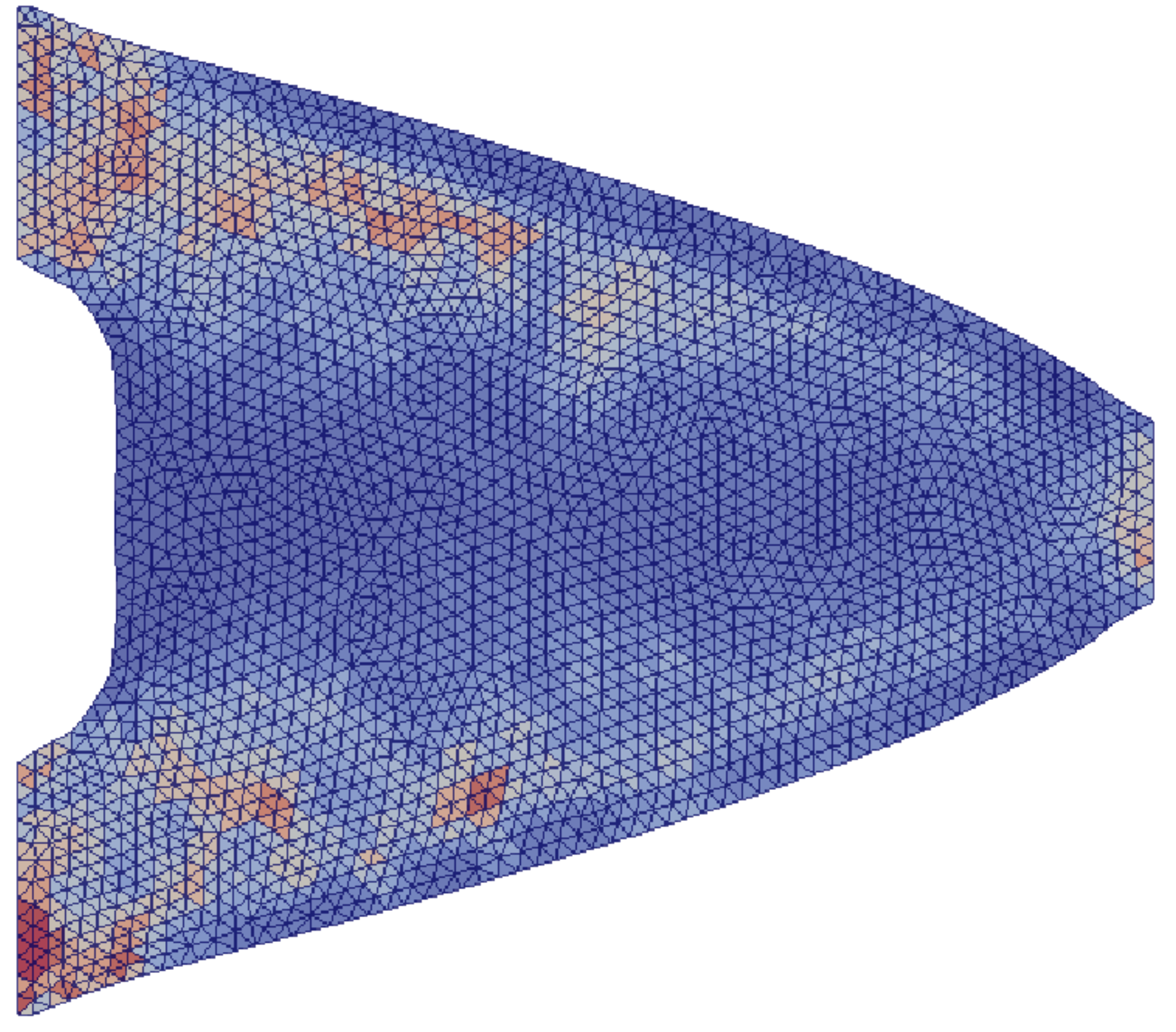}
    \label{fig:mix-NO-10}
    }
    \hfil
    \subfloat[20 iterations.]
    {
    \includegraphics[width=0.3 \columnwidth]{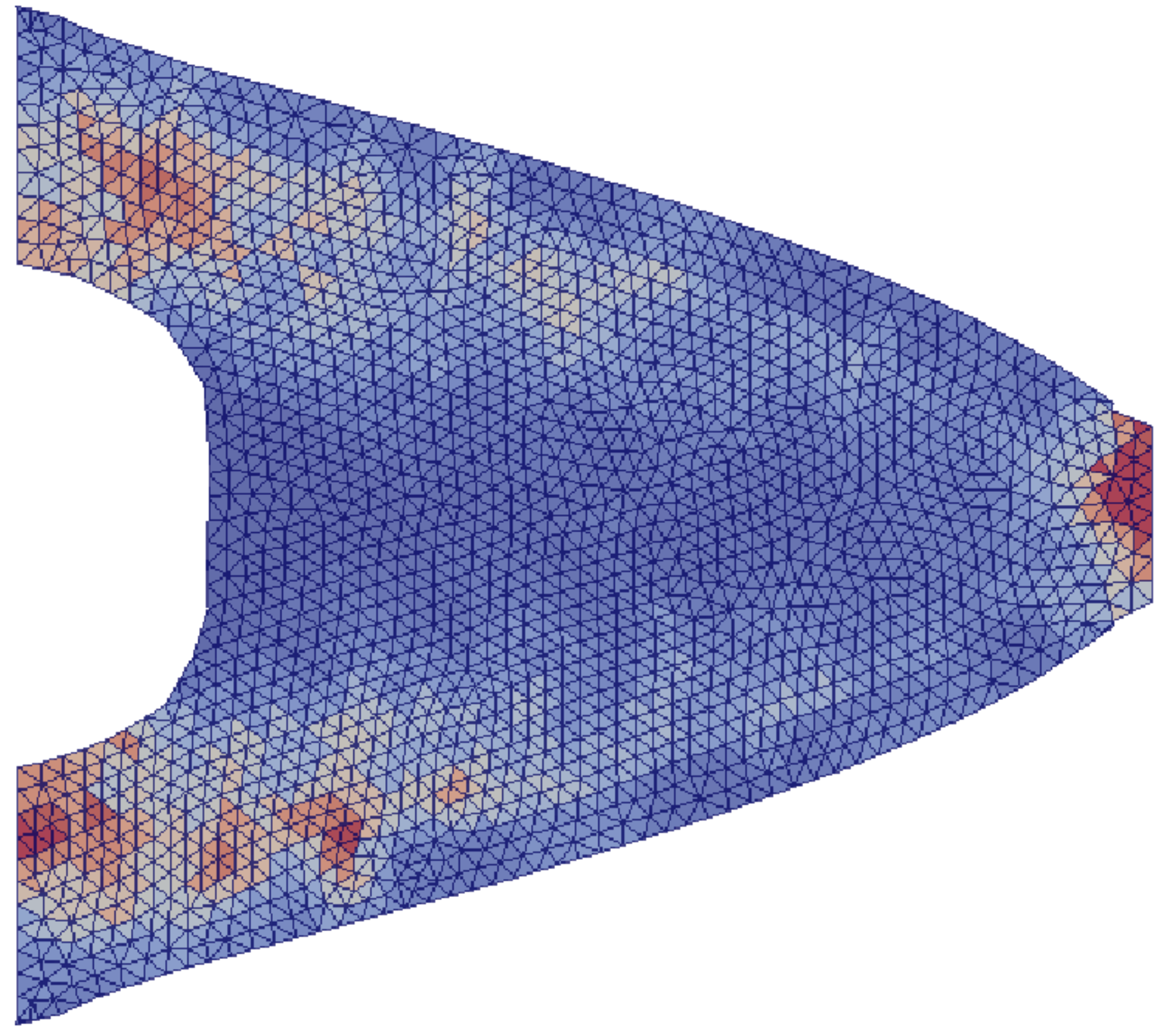}
    \label{fig:mix-NO-20}
    }
    \hfil
    \subfloat[30 iterations.]
    {
    \includegraphics[width=0.3 \columnwidth]{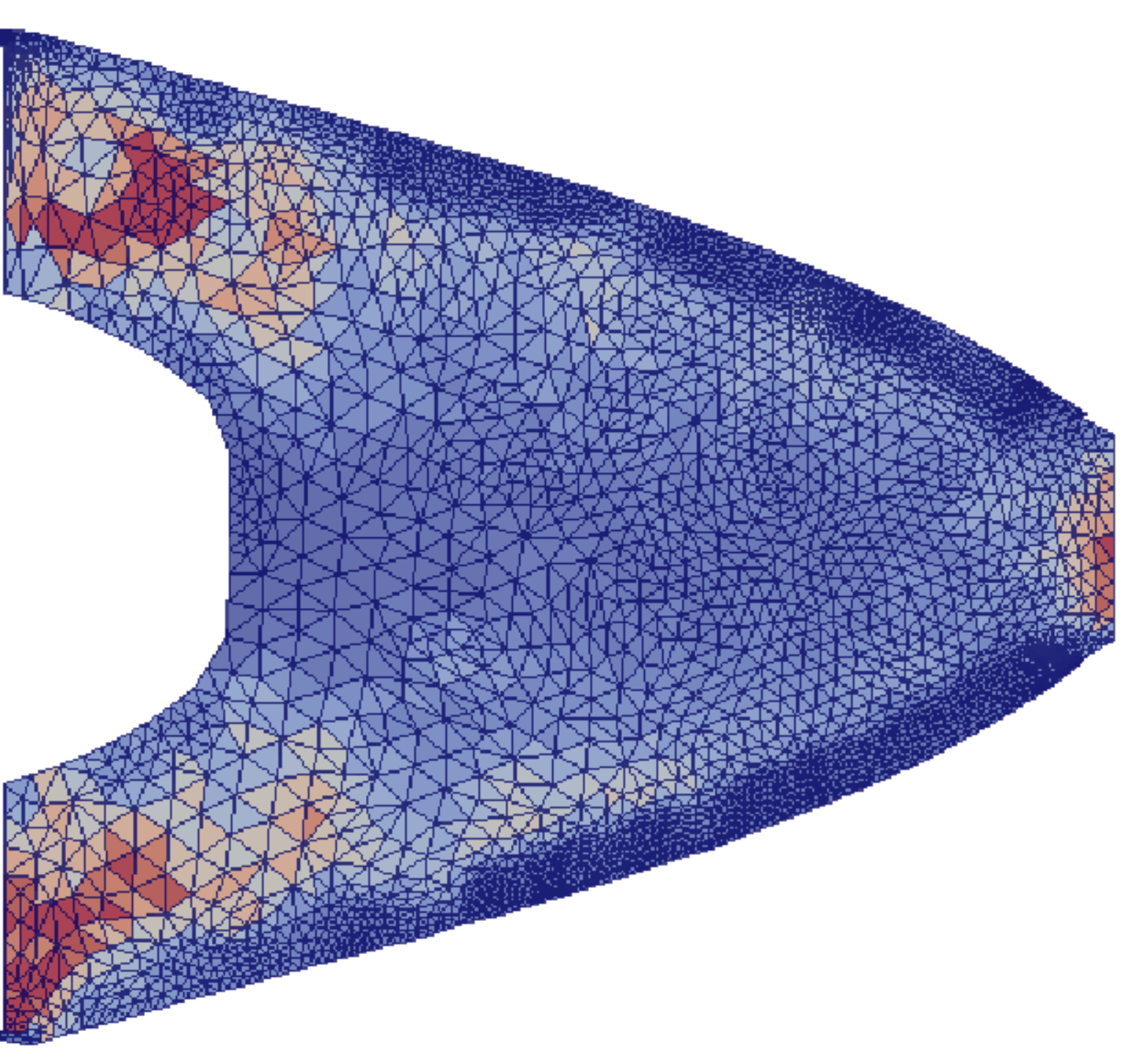}
    \label{fig:mix-NO-30}
    }
\end{minipage}
\begin{minipage}{0.1\columnwidth}
  \includegraphics[width=\columnwidth]{cantiNO-scale}
\end{minipage}
    \caption{Comparison of the BVA after 10, 20 and 30 iterations.
    At the top: BVA based on the expression of the shape gradient computed using 
    the pure displacement formulation of the linear elasticity problem.
    At the bottom: BVA using the shape gradient arising from the dual mixed variational formulation.
    Density distribution of the elastic energy within the range $(0,1.5 \cdot 10^{-3})$, 
    the lower values being in blue and the higher ones in red.}    
    \label{fig:NOholes}
\end{figure} 
\begin{figure}[hbtp]
    \centering
    \subfloat[Compliance $J(\Omega)$.]
    {
    \includegraphics[width=0.3 \columnwidth]{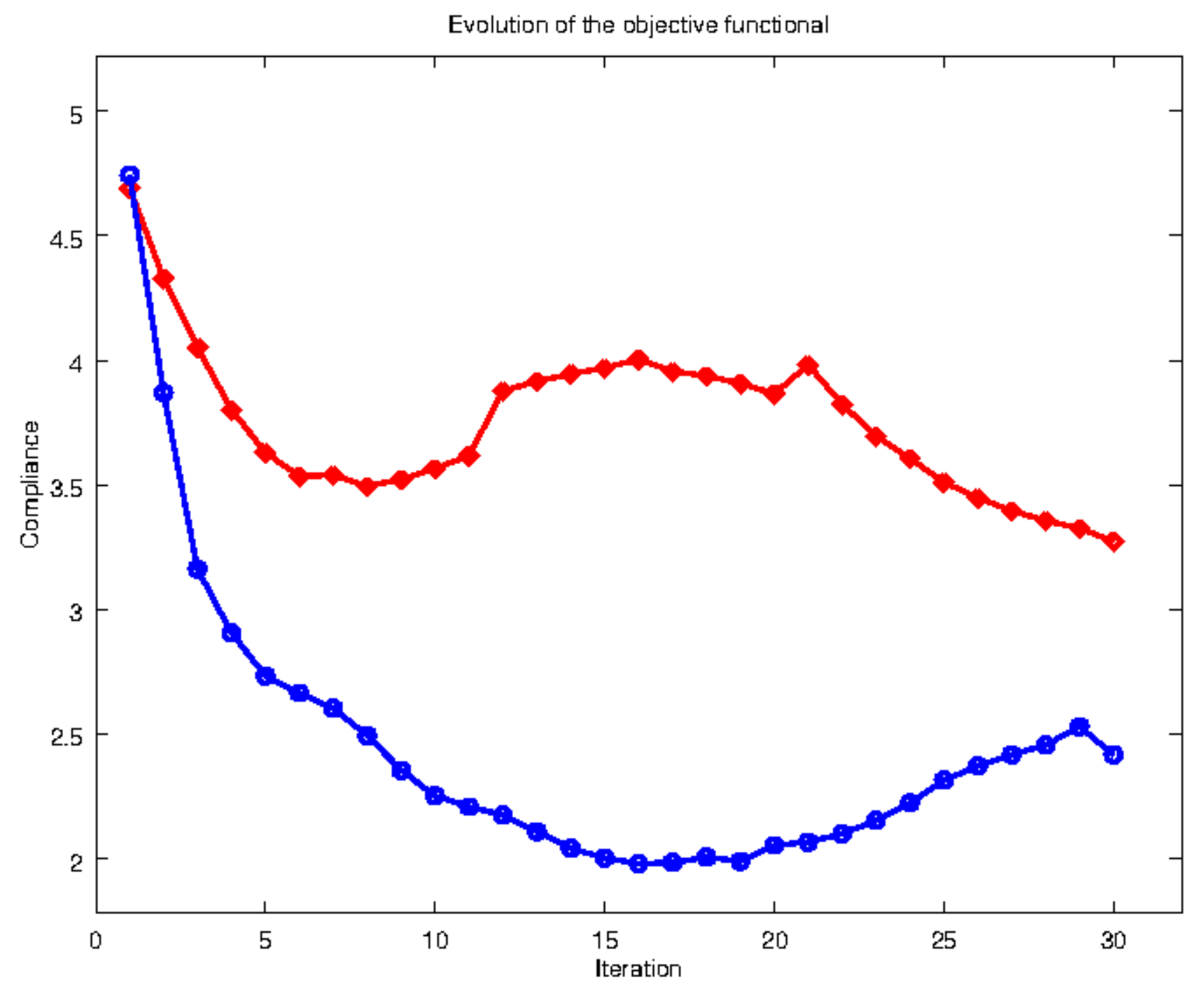}
    \label{fig:NO-compli}
    }
    \hfil
    \subfloat[Penalized functional $L(\Omega)$.]
    {
    \includegraphics[width=0.3 \columnwidth]{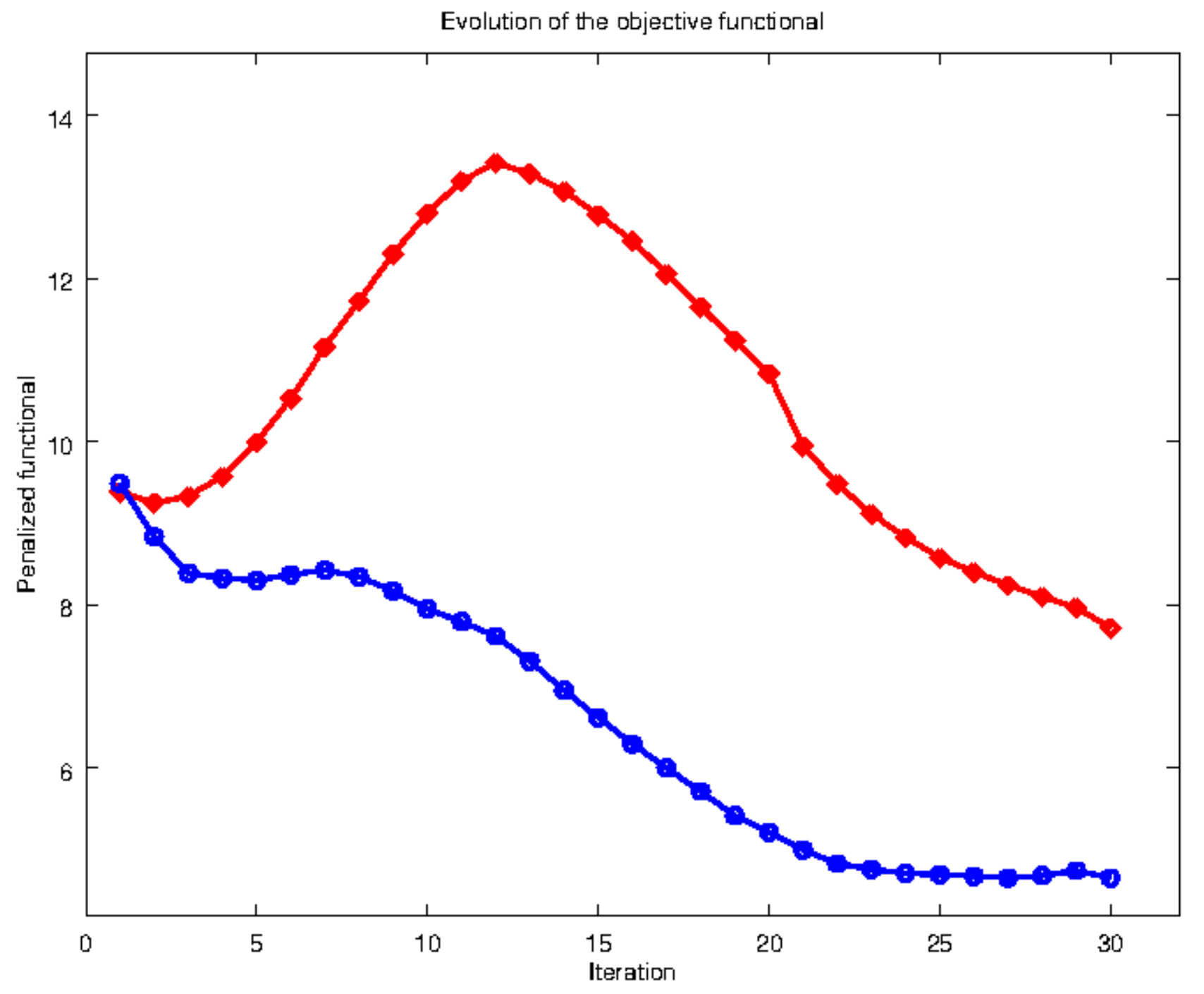}
    \label{fig:NO-penal}
    }
    \hfil
    \subfloat[Volume $V(\Omega)$.]
    {
    \includegraphics[width=0.3 \columnwidth]{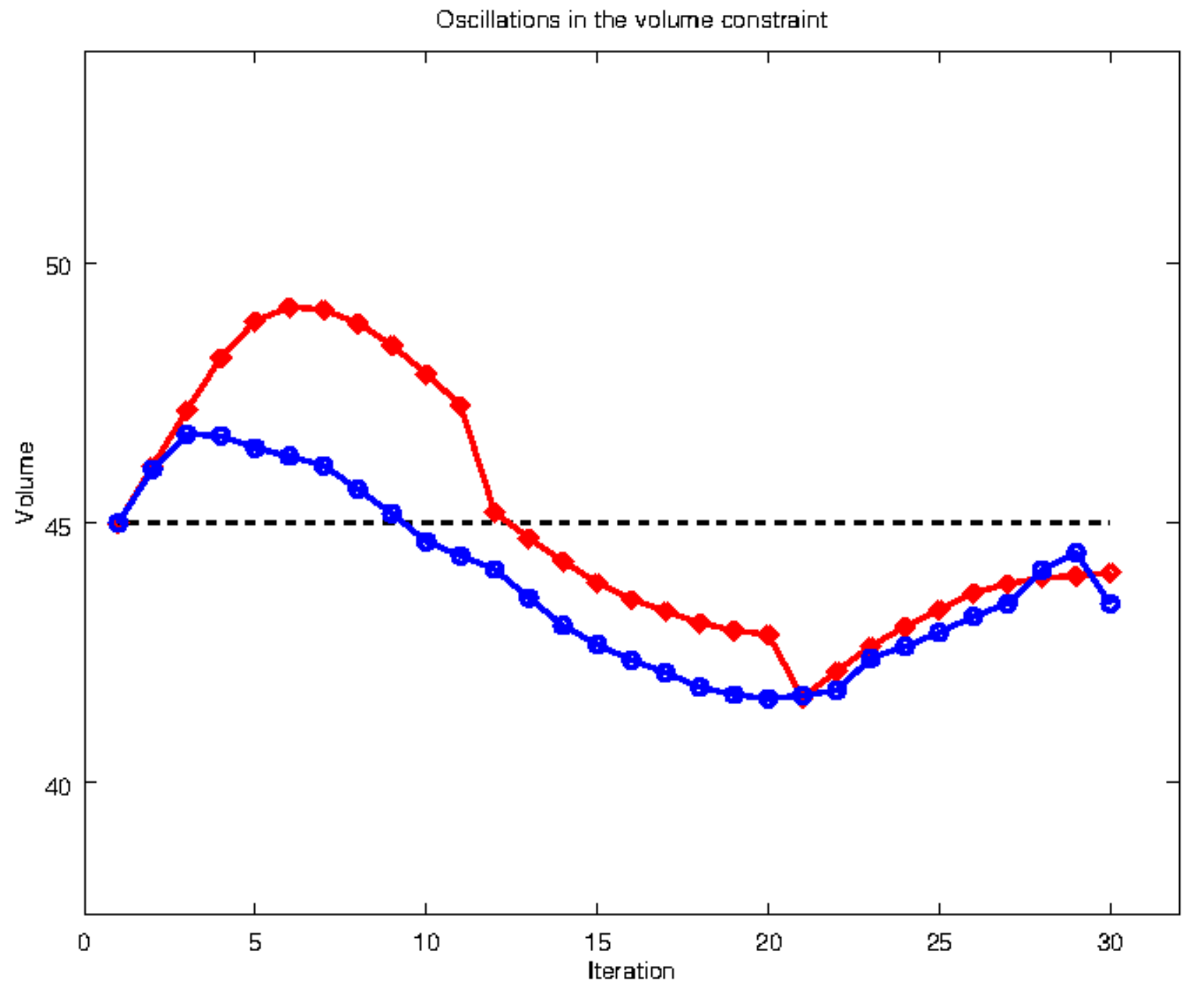}
    \label{fig:NO-vol}
    }
    \caption{Evolution of the (a) compliance $J(\Omega)$, (b) penalized functional 
    $L(\Omega)=J(\Omega)+\gamma V(\Omega)$ and (c) volume $V(\Omega)$ using the 
    BVA. Results obtained using the pure displacement 
    formulation (red diamond) and the dual mixed one (blue circle). 
    The reference volume $V_0$ is represented by a black dashed line in (c).}    
    \label{fig:NOholes-comparison}
\end{figure}

\subsubsection*{Cantilever beam with six holes}

The initial shape for the cantilever beam with six holes 
is depicted in figure \ref{fig:initial6} and features a 
reference volume $V_0 = 40.59$ and an initial Lagrange 
multiplier equal to $\gamma_0 = 0.13$.
As for the case of the bulky cantilever, we present 
snapshots of the shapes obtained at different iterations 
of the Boundary Variation Algorithm using both the pure 
displacement and the dual mixed formulation of the linear 
elasticity problem (Fig. \ref{fig:6holes}). 
Moreover, a qualitative analysis of the evolution of the 
compliance and of the variation of the volume is 
discussed starting from figure \ref{fig:6holes-comparison}.
As previously remarked, the Boundary Variation Algorithm 
based on the dual mixed formulation of the linear elasticity 
problem leads to configurations with lower elastic energy.
Figures \ref{fig:6-compli} and \ref{fig:6-penal} confirm that 
the variant of the BVA using the dual mixed formulation 
generates a sequence of shapes that improve the objective 
functional for several subsequent iterations.
On the contrary, the pure displacement formulation leads to a 
less robust strategy in which at the beginning of the 
optimization process, the compliance is reduced by 
increasing the volume of the structure.
Concerning the BVA based on the dual mixed formulation, 
the comparison of figure \ref{fig:mix-6-20} with 
figure \ref{fig:mix-6-30}, highlights that 
only minor modifications of the shape are performed 
by the algorithm from iteration 20 to iteration 30. 
As a matter of fact, the evolution of the volume (Fig. \ref{fig:6-vol}) 
shows that after having identified a configuration with 
low compliance the algorithm tends to correct the shape 
in order to fulfill the volume constraint which has been 
violated during the initial iterations.
As highlighted by the test case of the bulky cantilever, 
the Boundary Variation Algorithm based on the dual mixed formulation 
is able to construct structures with lower compliance than the 
configurations generated using the pure displacement formulation 
(Fig. \ref{fig:6-compli}).
Nevertheless, both the final configuration in figure \ref{fig:H1-6-30} 
and the one in figure \ref{fig:mix-6-30}, present some 
issues. On the one hand, the pure displacement solution presents 
kinks responsible for low compliance near the regions $\Gamma^D$ 
where the structure is clamped.
On the other hand, the shape obtained by the dual mixed formulation 
features thin components which may be critical to handle 
during the manufacturing process. 
Both these issues may be potentially influenced by the choice of 
explicitly representing the geometry through the computational mesh 
and the consequent moving mesh approach to deform the domain.
In order to bypass these issues, an implicit description of the geometry 
may be employed.

\begin{figure}[hbtp]
\begin{minipage}{0.88\columnwidth}
    \centering
    \subfloat[10 iterations.]
    {
    \includegraphics[width=0.3 \columnwidth]{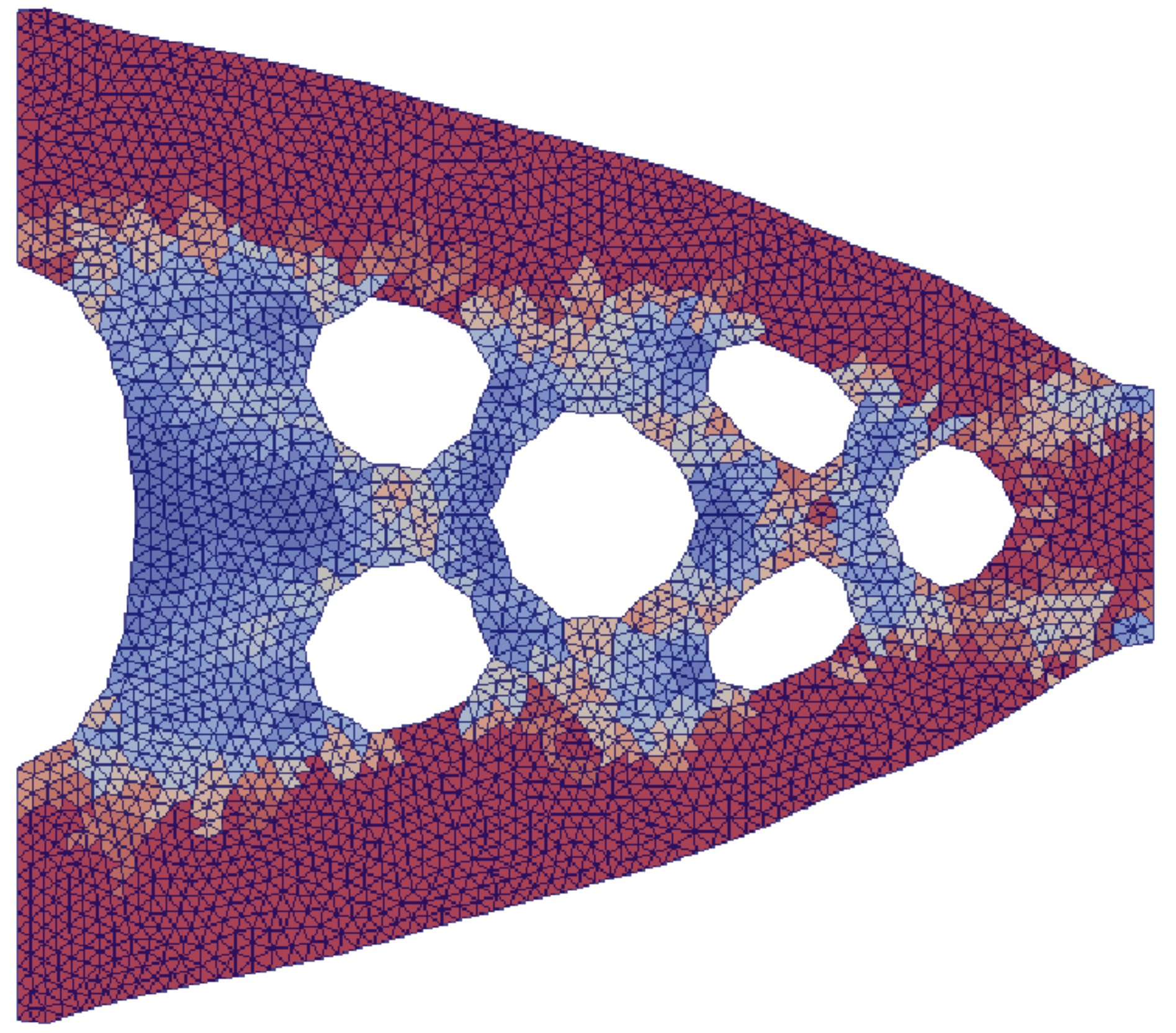}
    \label{fig:H1-6-10}
    }
    \hfil
    \subfloat[20 iterations.]
    {
    \includegraphics[width=0.3 \columnwidth]{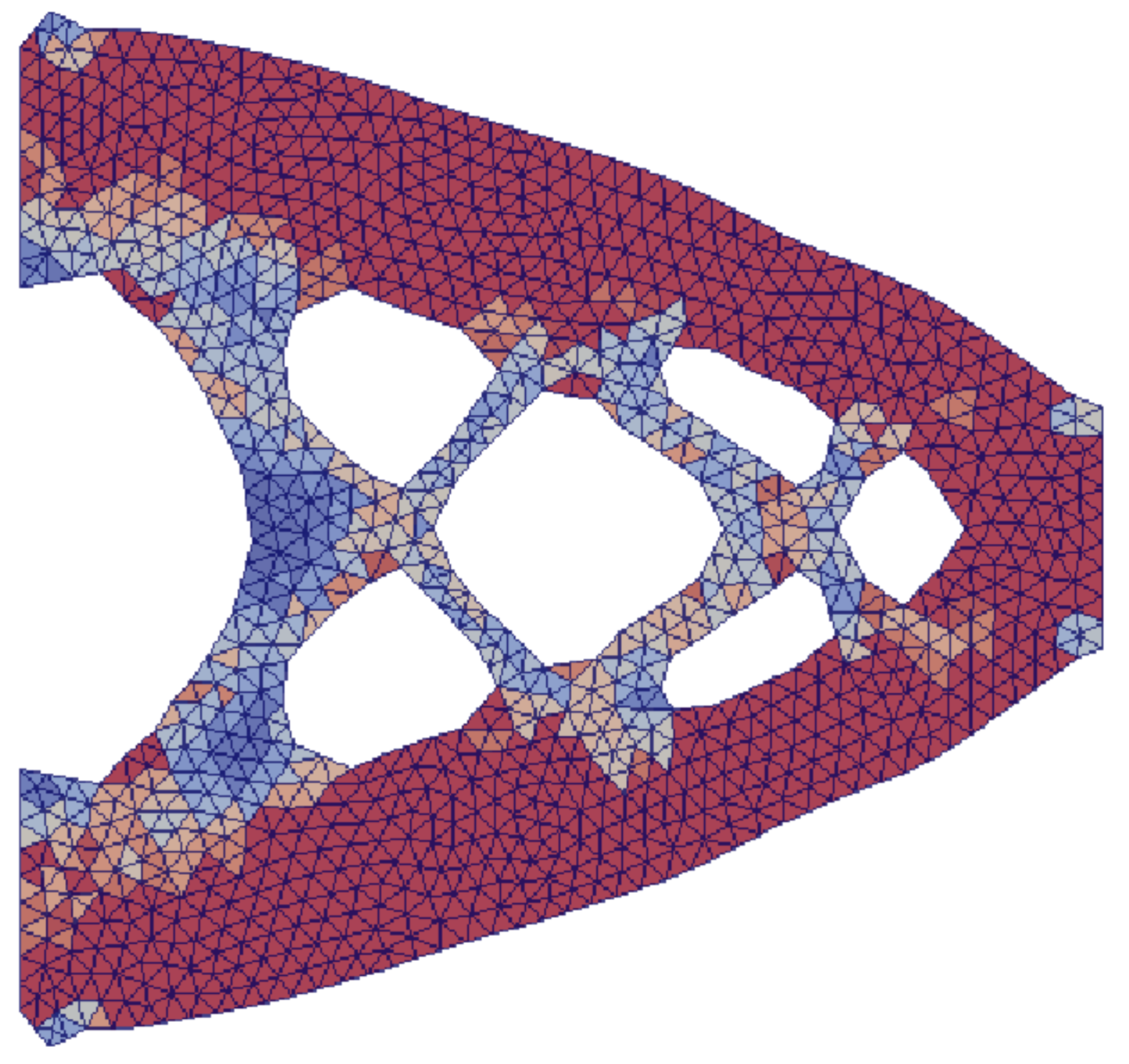}
    \label{fig:H1-6-20}
    }
    \hfil
    \subfloat[30 iterations.]
    {
    \includegraphics[width=0.3 \columnwidth]{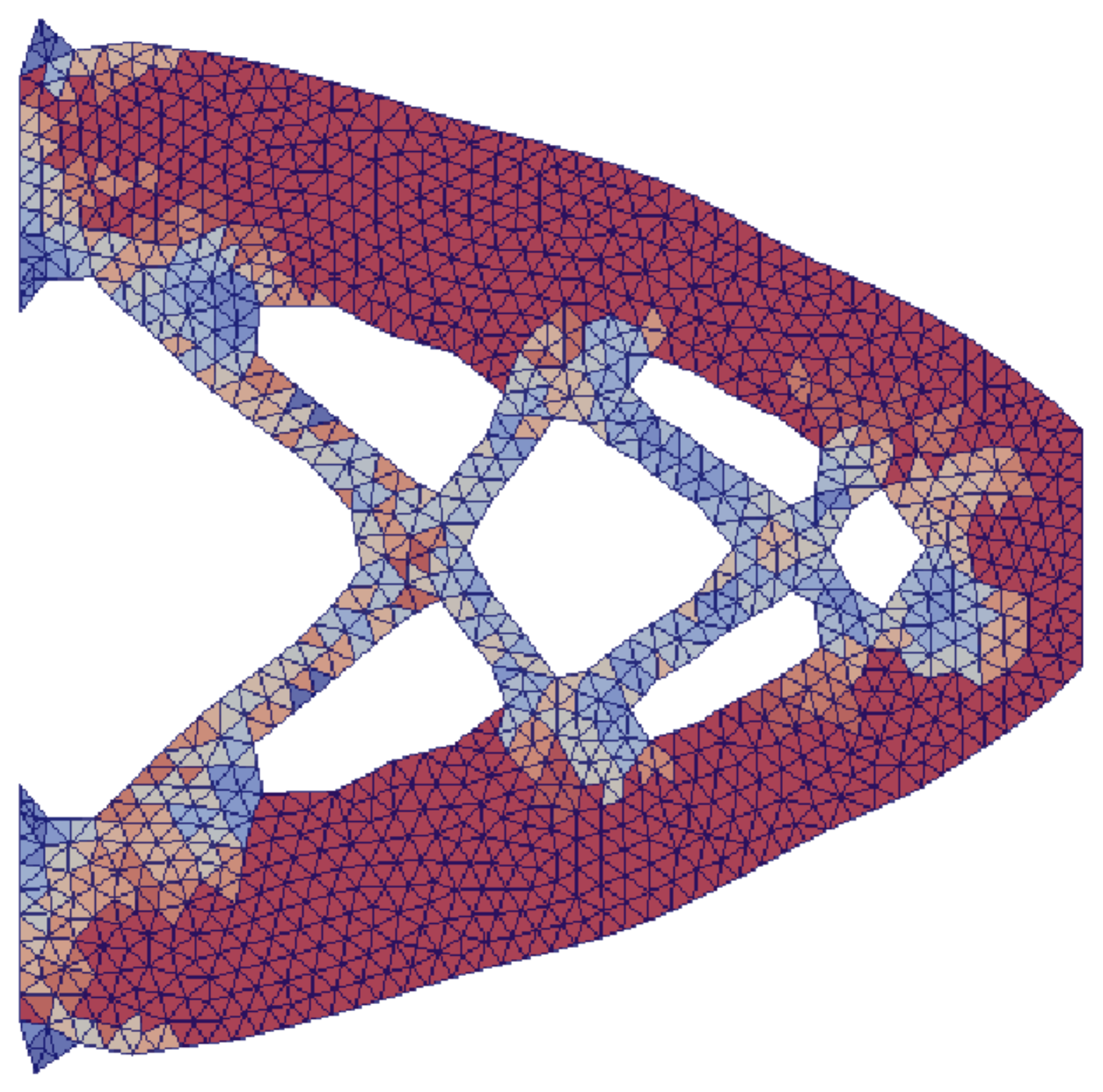}
    \label{fig:H1-6-30}
    }
    
    \subfloat[10 iterations.]
    {
    \includegraphics[width=0.3 \columnwidth]{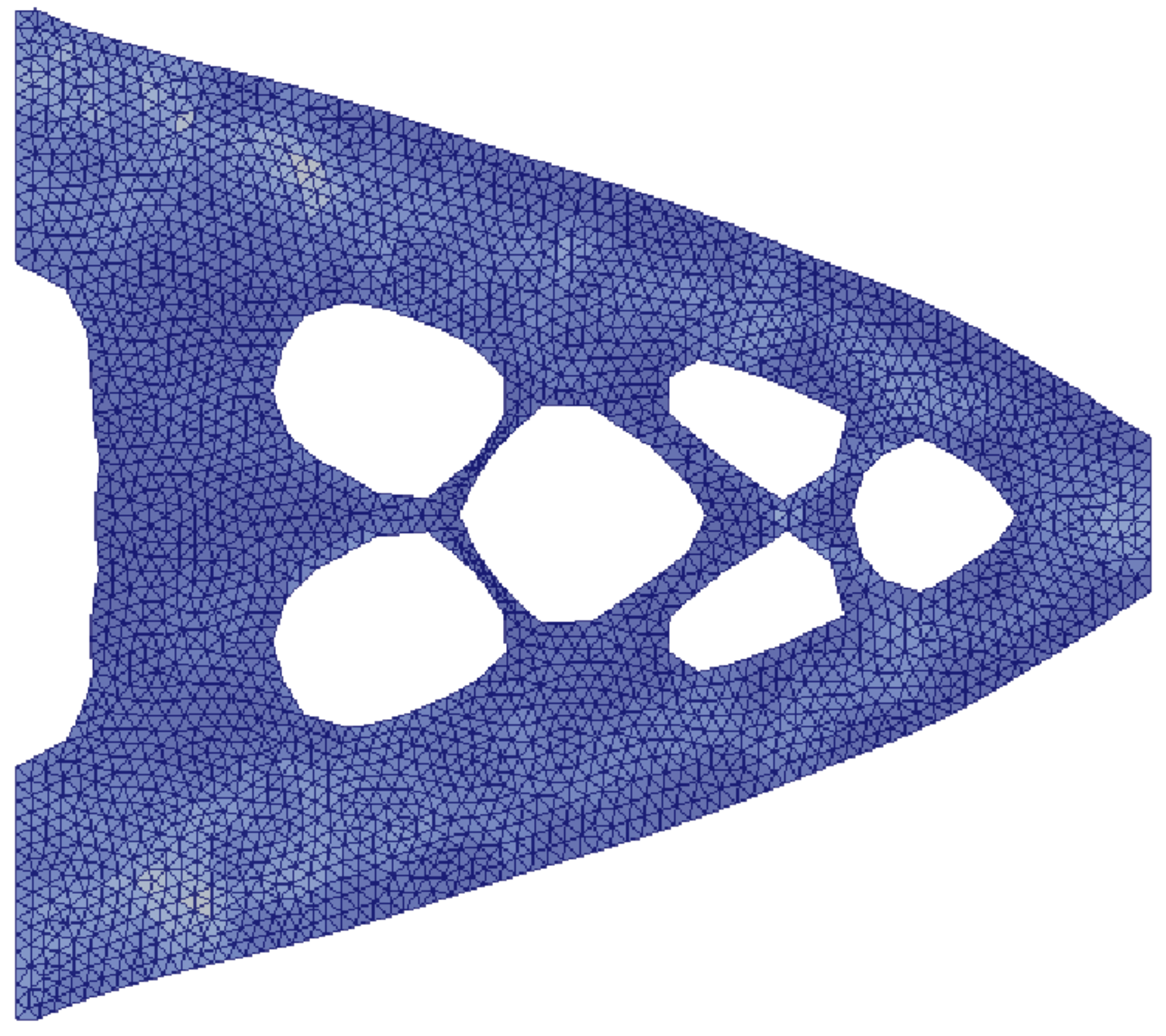}
    \label{fig:mix-6-10}
    }
    \hfil
    \subfloat[20 iterations.]
    {
    \includegraphics[width=0.3 \columnwidth]{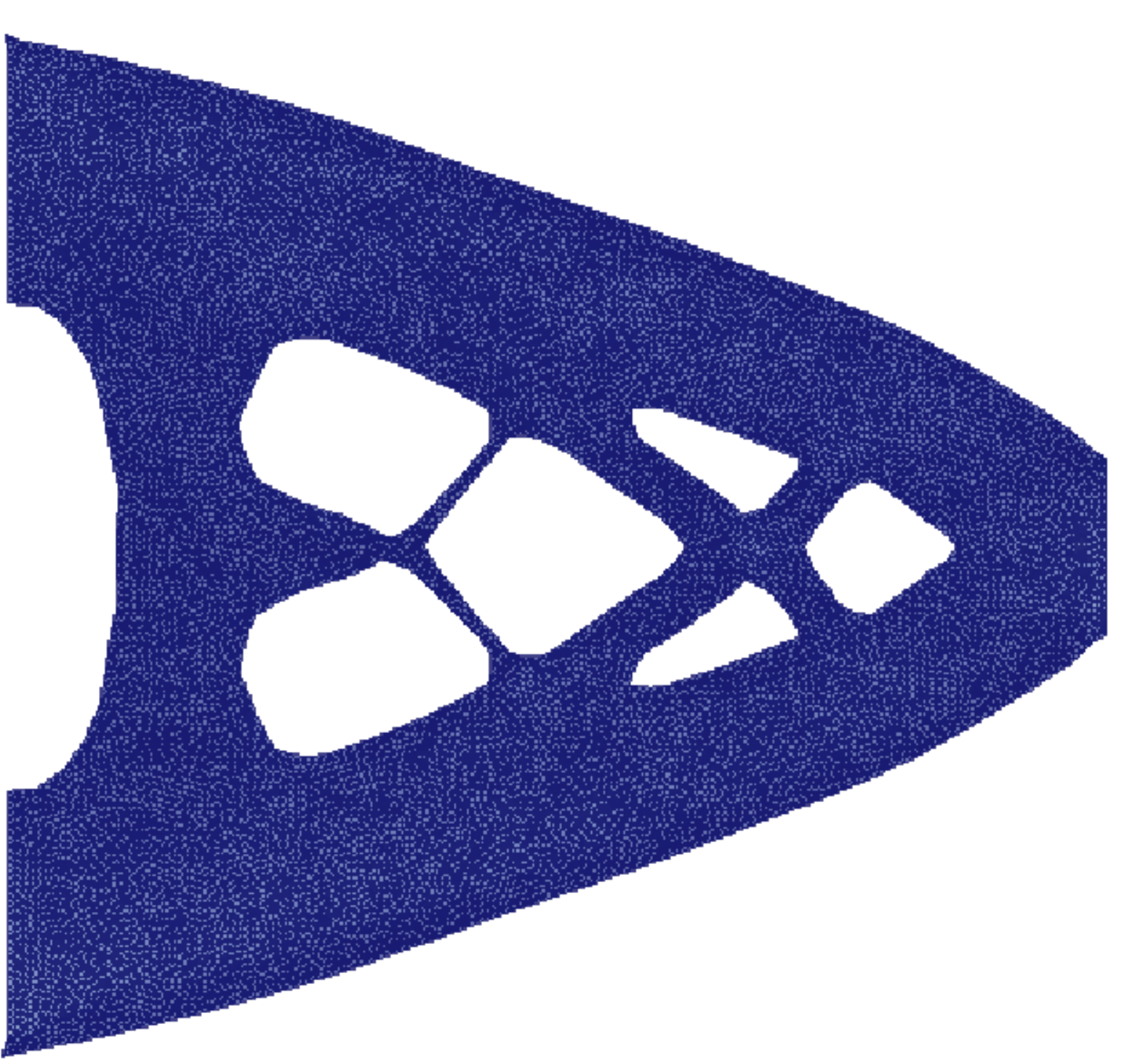}
    \label{fig:mix-6-20}
    }
    \hfil
    \subfloat[30 iterations.]
    {
    \includegraphics[width=0.3 \columnwidth]{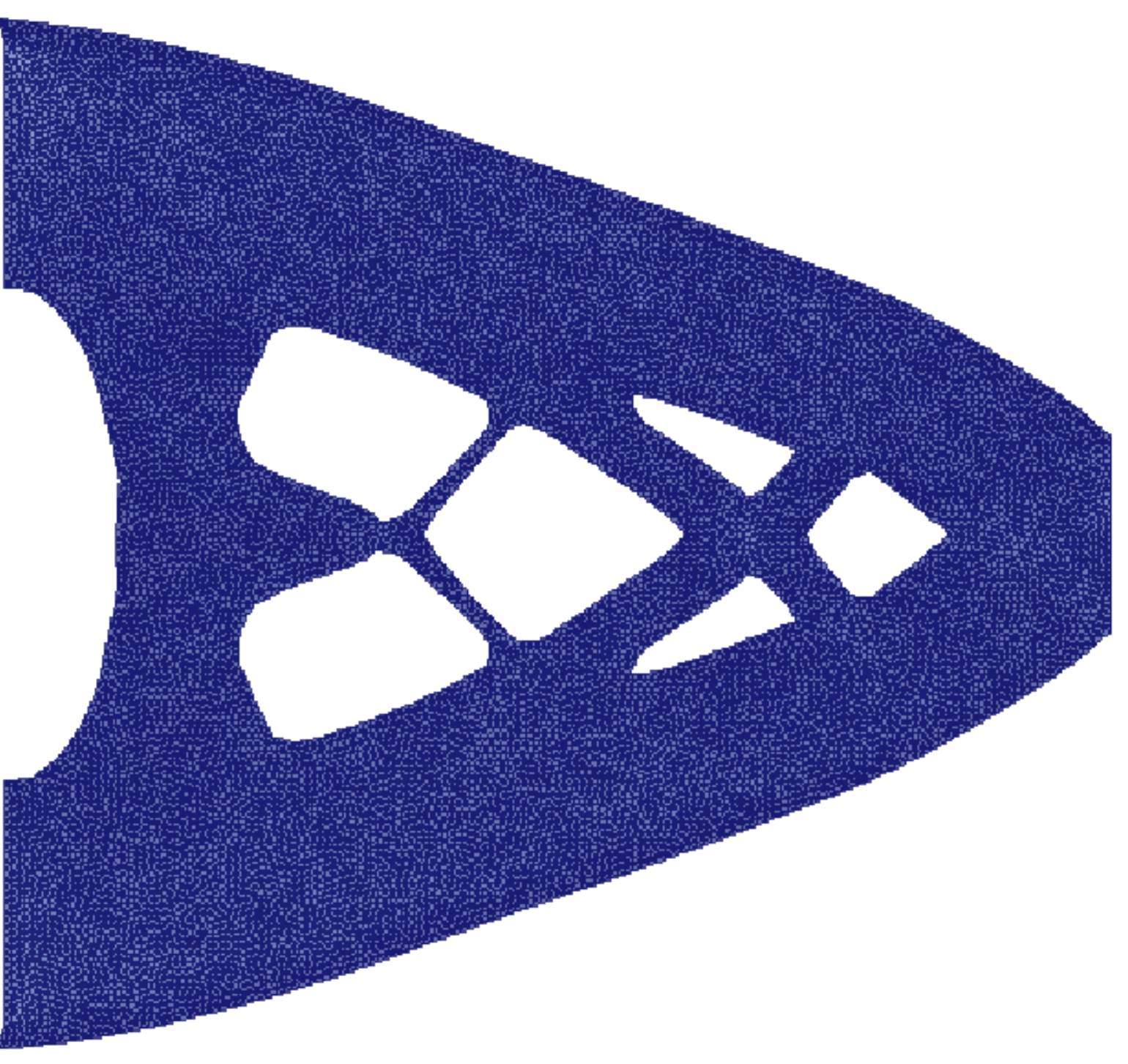}
    \label{fig:mix-6-30}
    }
\end{minipage}
\begin{minipage}{0.1\columnwidth}
  \includegraphics[width=\columnwidth]{canti6-scale}
\end{minipage}
    \caption{Comparison of the BVA after 10, 20 and 30 iterations.
    At the top: BVA based on the expression of the shape gradient computed using 
    the pure displacement formulation of the linear elasticity problem.
    At the bottom: BVA using the shape gradient arising from the dual mixed variational formulation.
    Density distribution of the elastic energy within the range $(0,3 \cdot 10^{-3})$, 
    the lower values being in blue and the higher ones in red.}    
    \label{fig:6holes}
\end{figure} 
\begin{figure}[hbtp]
    \centering
    \subfloat[Compliance $J(\Omega)$.]
    {
    \includegraphics[width=0.3 \columnwidth]{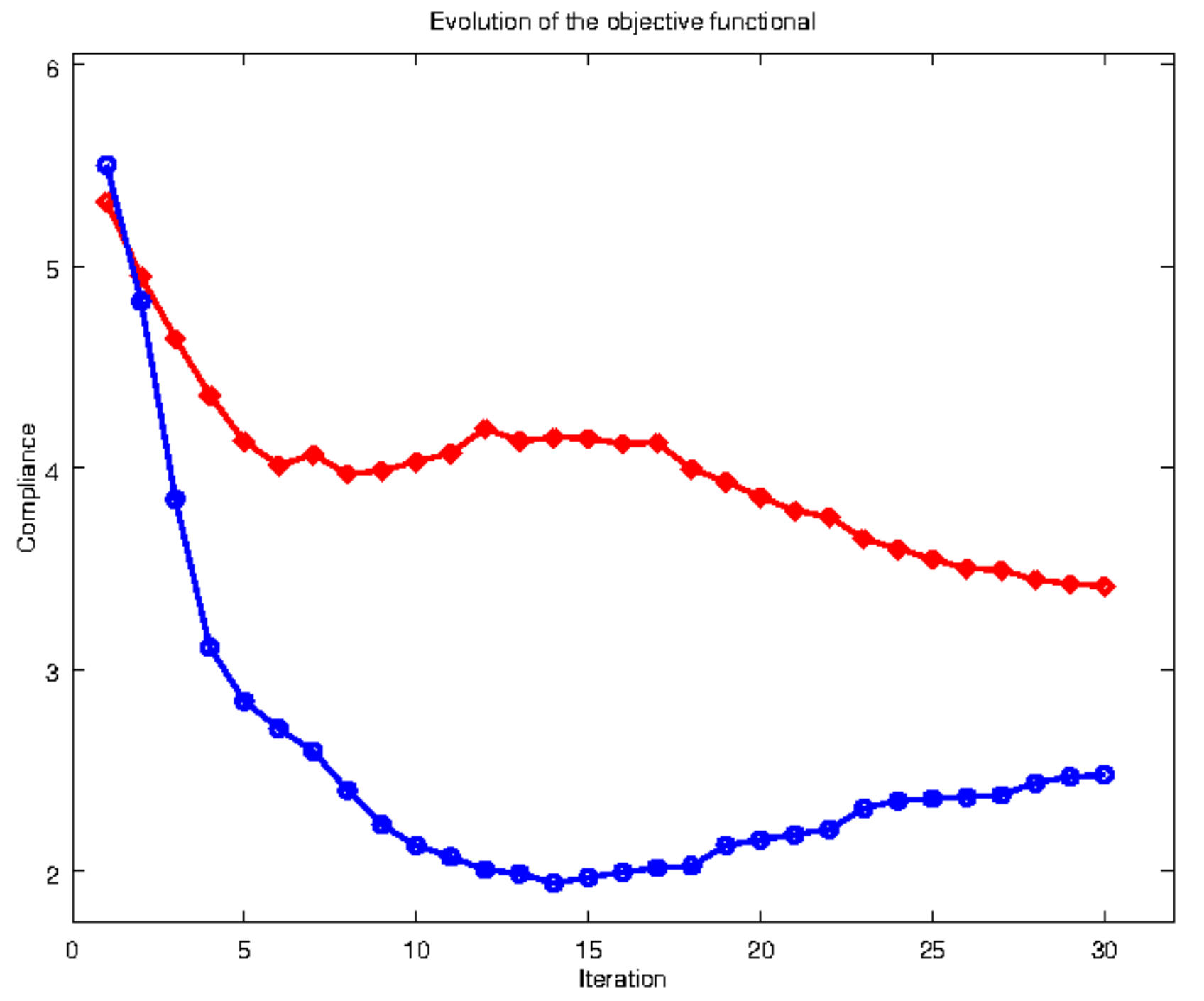}
    \label{fig:6-compli}
    }
    \hfil
    \subfloat[Penalized functional $L(\Omega)$.]
    {
    \includegraphics[width=0.3 \columnwidth]{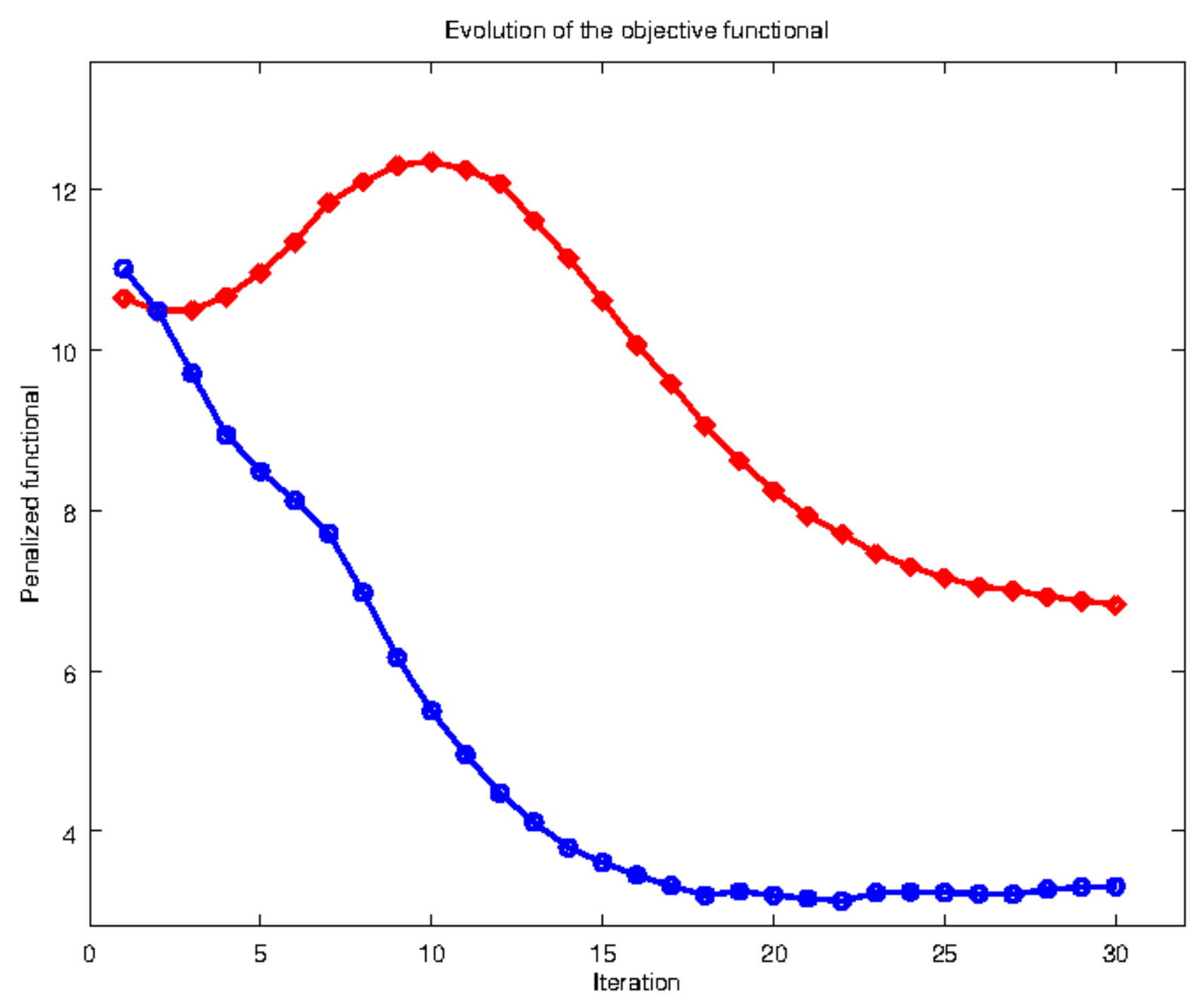}
    \label{fig:6-penal}
    }
    \hfil
    \subfloat[Volume $V(\Omega)$.]
    {
    \includegraphics[width=0.3 \columnwidth]{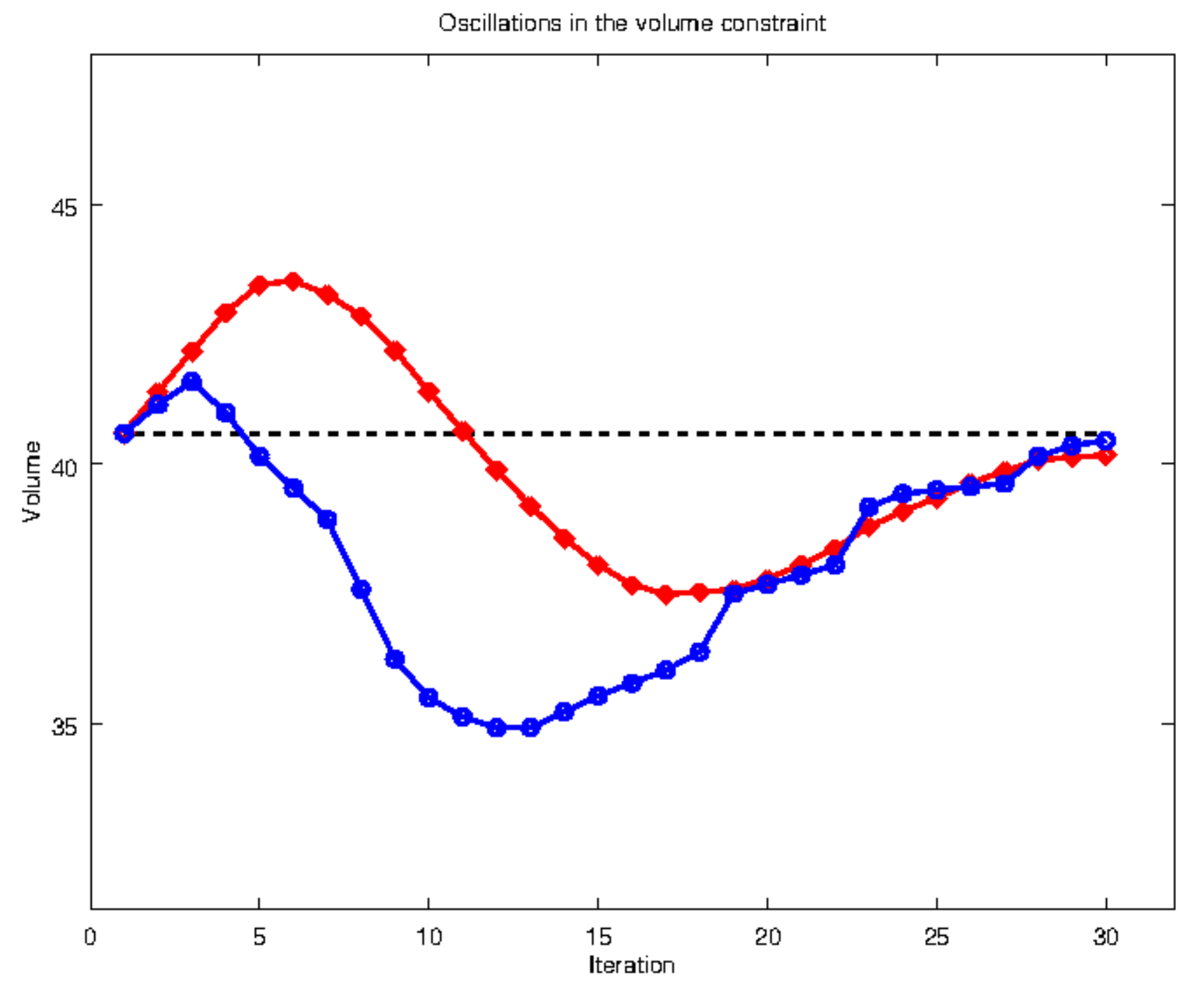}
    \label{fig:6-vol}
    }
    \caption{Evolution of the (a) compliance $J(\Omega)$, (b) penalized functional 
    $L(\Omega)=J(\Omega)+\gamma V(\Omega)$ and (c) volume $V(\Omega)$ using the 
    BVA. Results obtained using the pure displacement 
    formulation (red diamond) and the dual mixed one (blue circle). 
    The reference volume $V_0$ is represented by a black dashed line in (c).}    
    \label{fig:6holes-comparison}
\end{figure}

Concerning the computational cost of the overall optimization procedures, 
it is important to remark that the dual mixed formulation features more 
variables (stress tensor $\sigma_\Omega$, displacement field $u_\Omega$ 
and Lagrange multiplier $\eta_\Omega$) than the pure displacement one 
which - as the name states - solely relies on the displacement field $u_\Omega$.
From a practical point of view, this results in a higher number of Degrees 
of Freedom in the discrete problem and consequently a higher computational 
cost. 
Moreover, by comparing the first and the second lines of figures \ref{fig:NOholes} 
and \ref{fig:6holes}, 
we remark that the computations of the BVA based on the dual mixed formulation 
were performed on finer meshes than the ones used for the pure 
displacement one. 
This turned out to be necessary in order to retrieve an accurate solution of the 
dual mixed Finite Element problem of linear elasticity, whereas the  
pure displacement formulation may be easily approximated using Lagrangian Finite 
Element functions as long as one avoids the nearly incompressible and the 
incompressible case.
Eventually, the linear system obtained by the discretization discussed in 
subsection \ref{ref:mixed_weak} may be extremely ill-posed and the construction of 
appropriate preconditioners (cf. e.g. \cite{NLA:NLA357}) may be necessary.
Hence, though the preliminary numerical results suggest that the BVA based on 
the dual mixed formulation is the best choice when dealing with the minimization 
of the compliance in linear elasticity, the higher computational cost 
and the additional numerical difficulties of the overall strategy 
have to be taken into account to provide a global evaluation 
of the method.
Within this context, additional investigations have to be performed 
both from a theoretical point of view (e.g. \emph{a priori} estimate of the 
error in the shape gradient) and from a computational one, by optimizing 
and improving the resolution strategy outlined above.

\section{Conclusion}
\label{ref:conclusion}

To the best of our knowledge, the results in this article are 
the first attempt to derive volumetric expressions of the shape 
gradient of a shape-dependent functional within the framework of linear elasticity.
In particular, we computed two novel expressions of the shape gradient 
of the compliance starting from the pure displacement and the dual mixed 
formulations of the governing equation.
A preliminary comparison of the aforementioned expressions 
by means of numerical simulations showed extremely promising results, 
especially using the dual mixed variational formulation of the 
linear elasticity equation.
As a matter of fact, the global optimization strategy based on this approach 
seems more robust than the one obtained from the pure displacement formulation 
and is able to further reduce the compliance of the structure under analysis.
Nevertheless, a rigorous and detailed analysis both from an 
analytical and a numerical point of view is necessary to validate 
the aforementioned statement.
Concerning the analytical derivation of the volumetric shape gradient of the compliance, 
a rigorous proof of the equivalence of the expressions obtained using the 
pure displacement and the dual mixed variational formulations of the linear elasticity 
problem is required.
Moreover, following the analysis performed for the elliptic case in \cite{HPS_bit}, 
\emph{a priori} estimates of the error in the shape gradient may be derived. 
This analysis seems particularly interesting since it may provide additional 
information on the convergence of the shape gradient using different discretization 
techniques, thus possibly fostering one formulation over the other to achieve 
better accuracy in the approximation of the shape gradient.

\begin{table}[htb]
    \footnotesize
    \centering
    \subfloat[Test in fig. \ref{fig:NOholes-comparison} - Pure displacement formulation.]
    {
    \centering
    \begin{tabular}[hbt]{| c | c || c | c |}
    \hline
    It. $j$ & $L(\Omega_j)$ & $\langle d_h L(\Omega_j),\theta_j^h \rangle$ & $L(\Omega_{j+1})$ \\
    \hline & & & 
    \\ [-1em] \hline
    1 & $9.38$ & $-5.7 \cdot 10^{-1}$ & $9.25$ \\
    \rowcolor{yellow} \hline
    5 & $9.99$ & $-6.61 \cdot 10^{-2}$ & $10.52$ \\
    \rowcolor{yellow} \hline
    10 & $12.80$ & $-1.05 \cdot 10^{-1}$  & $13.19$ \\
    \hline
    15 & $12.78$ & $-9.4 \cdot 10^{-2}$ & $12.46$ \\
    \hline
    25 & $8.58$ & $-3.61 \cdot 10^{-2}$ & $8.4$ \\
    \hline    
    30 & $7.54$ & $-9.52 \cdot 10^{-3}$ & $-$ \\
    \hline
    \end{tabular}
    \label{tab:NOholesH1}
    }
    \subfloat[Test in fig. \ref{fig:NOholes-comparison} - Dual mixed formulation.]
    {
    \centering
    \begin{tabular}[hbt]{| c | c || c | c |}
    \hline
    It. $j$ & $L(\Omega_j)$ & $\langle d_h L(\Omega_j),\theta_j^h \rangle$ & $L(\Omega_{j+1})$ \\
    \hline & & & 
    \\ [-1em] \hline
    1 & $9.49$ & $-4.78$ & $8.83$ \\
    \rowcolor{yellow} \hline 
    6 & $8.37$ & $-7.28 \cdot 10^{-1}$ & $8.43$ \\
    \hline
    10 & $7.95$ & $-7.88 \cdot 10^{-1}$ & $7.79$ \\
    \hline
    15 & $6.62$ & $-5.55 \cdot 10^{-1}$ & $6.3$ \\
    \rowcolor{yellow} \hline
    28 & $4.68$ & $-6.55 \cdot 10^{-1}$ & $4.74$ \\
    \hline    
    30 & $4.66$ & $-6.21 \cdot 10^{-1}$ & $-$ \\
    \hline
    \end{tabular}
    \label{tab:NOholesMix}
    }

    \subfloat[Test in fig. \ref{fig:6holes-comparison} - Pure displacement formulation.]
    {
    \centering
    \begin{tabular}[htb]{| c | c || c | c |}
    \hline
    It. $j$ & $L(\Omega_j)$ & $\langle d_h L(\Omega_j),\theta_j^h \rangle$ & $L(\Omega_{j+1})$ \\
    \hline & & & 
    \\ [-1em] \hline
    1 & $10.64$ & $-1.00$ & $10.48$ \\
    \rowcolor{yellow} \hline
    3 & $10.50$ & $-3.68  \cdot 10^{-1}$ & $10.66$ \\
    \rowcolor{yellow} \hline
    9 & $12.30$ & $-1.74 \cdot 10^{-1}$  & $12.34$ \\
    \hline
    15 & $10.61$ & $-1.04 \cdot 10^{-1}$ & $10.06$ \\
    \hline
    25 & $7.16$ & $-1.1 \cdot 10^{-2}$ & $7.05$ \\
    \hline    
    30 & $6.82$ & $-2.75 \cdot 10^{-3}$ & $-$ \\
    \hline
    \end{tabular}
    \label{tab:6holesH1}
    }
    \subfloat[Test in fig. \ref{fig:6holes-comparison} - Dual mixed formulation.]
    {
    \centering
    \begin{tabular}[htb]{| c | c || c | c |}
    \hline
    It. $j$ & $L(\Omega_j)$ & $\langle d_h L(\Omega_j),\theta_j^h \rangle$ & $L(\Omega_{j+1})$ \\
    \hline & & & 
    \\ [-1em] \hline
    1 & $11.01$ & $-15.32$ & $10.48$ \\
    \hline
    5 & $8.49$ & $-2.55$ & $8.12$ \\
    \hline
    10 & $5.50$ & $-1.29$ & $4.95$ \\
    \rowcolor{yellow} \hline
    18 & $3.18$ & $-1.36$ & $3.19$ \\
    \rowcolor{yellow} \hline
    28 & $3.32$ & $-2.06$ & $3.36$ \\
    \hline    
    30 & $3.37$ & $-2.18$ & $-$ \\
    \hline
    \end{tabular}
    \label{tab:6holesMix}
    }
\caption{Boundary Variation Algorithm based on the pure displacement formulation 
(left) and on the dual mixed formulation (right) of the linear elasticity problem.
On the first line: test case in figure \ref{fig:NOholes-comparison}. 
On the second line: test case in figure \ref{fig:6holes-comparison}. 
Evolution of the penalized objective functional $L(\Omega)$ with respect to 
the iteration number. 
In yellow: the cases in which the discretized direction $\theta^h$ fails 
to be a genuine descent direction for $L(\Omega)$ despite being 
$\langle d_h L(\Omega), \theta^h \rangle < 0$.}
\label{tab:NO6holes}
\end{table}

The numerical results in section \ref{ref:shapeGrad_comparison} 
highlight some issues associated with the application of the Boundary 
Variation Algorithm to the minimization of the compliance in structural 
optimization. 
On the one hand, it is straightforward to observe 
(Fig. \ref{fig:NOholes-comparison} and \ref{fig:6holes-comparison}) 
that the direction computed using the 
discretized shape gradient is not always a genuine descent direction for the 
functional under analysis. To remedy this issue, in 
\cite{1742-6596-657-1-012004, giacomini:hal-01201914, giacomini:flux} we 
proposed a variant of the BVA - named Certified Descent Algorithm (CDA) - that 
couples a gradient-based optimization strategy with \emph{a posteriori} 
estimators of the error in the shape gradient.
This remark is confirmed by table \ref{tab:NO6holes} in which we observe that despite 
being $\langle d_h L(\Omega), \theta^h \rangle < 0$, 
the functional $L(\Omega)$ may increase when the shape is perturbed accordingly 
to the field $\theta^h$.
Ongoing investigations focus on the application of the aforementioned CDA 
to the minimization of the compliance discussed in this article.
On the other hand, the choice of explicitly representing the geometry 
and deforming it by moving the computational mesh is responsible for 
the degradation of the final shapes computed by the algorithm.
Currently, we are investigating the approach proposed 
by Allaire \etal \ in \cite{Allaire201422}
that exploits an implicit description of the geometry via a 
level-set function and propagates it by solving an 
Hamilton-Jacobi equation.

Eventually mixed formulations of the linear elasticity problem with 
strongly-enforced symmetry of the stress tensor may be investigated, e.g. the 
Hellinger-Reissner formulation approximated by means of 
Arnold-Winther Finite Element spaces (cf. \cite{MR1930384, 10.2307/40234556})
and the Tangential-Displacement Normal-Normal-Stress (TD-NNS) formulation 
recently proposed by Pechstein and Sch{\"o}berl in \cite{MR2826472}.

\subsection*{Acknowledgements}

Part of this work was developed during a stay of the first author at 
the Laboratoire J.A. Dieudonn\'e at Universit\'e de Nice-Sophia Antipolis 
whose support is warmly acknowledged.

\bibliographystyle{abbrv}
\bibliography{./bibliography}

\end{document}